\DeclareMathOperator{\rad}{\text{rad}}
\DeclareMathOperator{\Ker}{\text{Ker}}
\DeclareMathOperator{\Aa}{\mathcal{A}}
\DeclareMathOperator{\dimk}{\operatorname{dim}_k}
\theoremstyle{definition}
\newtheorem{defi}{Definition}[section]
\newtheorem{lem}[defi]{Lemma}
\newtheorem{teo}[defi]{Theorem}
\newtheorem{cor}[defi]{Corollary}
\newtheorem{ex}[defi]{Example}
\theoremstyle{remark}
\newtheorem{obs}[defi]{Remark}
\title{\textbf{On the correspondence between path algebras and generalized path algebras}}
\author{Viktor Chust\textsuperscript{a,*} \and Flávio Ulhoa Coelho\textsuperscript{b}\\ \small{\textsuperscript{a,b}Institute of Mathematics and Statistics - University of São Paulo, São Paulo, Brazil}\\
\small{\textsuperscript{*}Corresponding author. E-mail address: viktorch@ime.usp.br}\\
\small{ORCID: 0000-0003-4931-4222}}
\date{}
\begin{document}

\maketitle

\begin{abstract}
The concept of generalized path algebras was introduced in (Coelho and Liu, 2000). 
It was shown in (Ibáñez Cobos et al., 2008) how to obtain the Gabriel quiver of a given generalized path algebra. In this article, we generalize the concept of generalized path algebra to allow them to have relations, and we extend the result in (Ibáñez Cobos et al., 2008) to this new setting. 
Moreover, we use the extended result mentioned above to address the inverse problem: that is, the problem of determining when a given algebra is isomorphic to a generalized path algebra in a non-trivial way. \\[1ex]
Keywords: generalized path algebras, bound path algebras, representations of algebras \\[1ex]
Mathematics Subject Classification: Primary 16G10, Secondary 16G20
\end{abstract}

\section{Introduction}
\label{sec:intro}

Along this paper, all algebras are assumed to be finite dimensional basic $k$-algebras, where $k$ is an algebraically closed field. For such an
algebra, there is a standard and already classical way of \textit{representing} it by means of a quiver, which is given by a theorem due to Gabriel and the so-called \textbf{path algebras}. Namely, given an algebra $A$, there exists a (uniquely determined) quiver $Q_A$ such that $A$ is isomorphic to a quotient of the path algebra $k Q_A$ (see Subsection~\ref{subsec:gabriel} for details). 

Also, as introduced by F.U. Coelho and S.X. Liu in \cite{CLiu}, the 
concept of \textbf{generalized path algebras} yields another way of constructing algebras from a quiver  (see Section~\ref{sec:noc_prem} below for definitions and basic results). The underlying idea behind the concept of generalized path algebras is this: given a quiver $Q$, assign to each of its vertices $x$ an algebra  $A_x$ (in the classical case of path algebras, it is assigned the base field $k$). Then the multiplication is induced by composition of paths and multiplication inside the $A_x$. 

The generalized path algebras are particular cases of tensor algebras over certain pro-species, an idea introduced by J. Külshammer in \cite{Kuls17}. We also refer to \cite{ICNLP,FLi1} for discussions on the relationship between generalized path algebras and other concepts.

Much like an usual path algebra may have relations, we can also consider quotients of path algebras and generalized path algebras by suitable ideals, giving the \textbf{bound path algebras} and \textbf{generalized bound path algebras} (here also called \textbf{gbp-algebras}) respectively.

An algebra $A= kQ_A/ I_A$ (where $I_A$ is an admissible ideal) can naturally be seen as a gbp-algebra in two different ways. On the one hand, using its ordinary quiver (and relations) and the usual Gabriel construction for path algebras mentioned above, and, on the other, using a quiver with a sole vertex and no arrows and the algebra itself assigned to it. We shall call these ways of representing $A$ as a gbp-algebra as \textbf{trivial}. A question which naturally arises is on the possibility of having descriptions other than the above ones for $A$ as a gbp-algebra. Clearly, if this is possible, then to $A$ we shall assign a quiver (generally smaller than its Gabriel quiver) and a set of algebras, one for each vertex of this new quiver, and this might, in principle, allow us to better understand the original algebra. This connection will be our main focus of discussion in this paper.

In Section 2, we recall some basic notions on quivers and algebras and establish the needed notations. Section 3 is devoted to prove a result (namely, Theorem~\ref{th:gen_icnlp}) which establishes the ordinary quiver and the relations of a given gbp-algebra, thus extending a result from \cite{ICNLP}. 

The remaining of the paper will then be devoted to the inverse process: how to obtain a (non-trivial) gbp-algebra isomorphic to a given bound path algebra $A$. This is not always possible and, in case it is, we shall call it a \textbf{non-trivial simplification of $A$}. As we shall see, the non-trivial simplifications of $A$ can be obtained by determining which are the equivalence relations on the vertices of the Gabriel quiver of $A$ that satisfy some combinatorial properties. These discussions are summarized in Theorems~\ref{th:simpl_from_equiv_rel} and~\ref{th:equiv_rel_from_simpl}. 

In a forthcoming paper we shall discuss the representations and some homological properties of the generalized (bound) path algebras \cite{Chust2}.

\section{Definitions and Basic Properties}
\label{sec:noc_prem}

In this preliminary section, we fix some notation and state some of the basic ideas that shall be used through these pages. Then we shall properly recall the definition of generalized path algebras and give some basic properties of these.

If the reader wants a basic reference covering the topics treated in this section, we recommend \cite{AC,ASS,ARS}.

\subsection{\textit{Quivers}}

It will be useful to fix some notations and assumptions regarding quivers:

\begin{defi}
A \textbf{quiver}\index{quiver} is a tuple $Q = (Q_0,Q_1,s,e)$, where $Q_0$ and $Q_1$ are sets and $s,e: Q_1 \rightarrow Q_0$ are functions. The elements of $Q_0$ are called \textbf{vertices}\index{vertex}, the elements of $Q_1$ are called \textbf{arrows}\index{arrow}, and given an arrow $\alpha \in Q_1$, $s(\alpha)$ is called \textbf{start}\index{arrow!start} of $\alpha$, and $e(\alpha)$ is called \textbf{end}\index{arrow!end} of $\alpha$.
\end{defi}

It will be useful to adopt the following notations: given a quiver $Q$ and two vertices $i,j \in Q_0$, we denote $Q(i,j) = \{\alpha \in Q_1: s(\alpha) = i \text{ and } e(\alpha)=j\}$ and indicate by $[i,j]_Q$ the number of arrows belonging to $Q(i,j)$. (Which means that $[i,j]_Q = |Q(i,j)|$).

If $\alpha$ is an arrow of $Q$ and $s(\alpha) = e(\alpha)$, then $\alpha$ is said to be a \textbf{loop}\index{loop}.

In this paper, we shall be always assuming that the quivers are \textbf{finite}\index{quiver!finite}, which means that $Q_0$ and $Q_1$ will be finite sets.

\begin{defi}
Let $Q$ be a quiver. To each vertex $i \in Q_0$ of $Q$ we associate a \textbf{zero-length path} over $Q$, which is denoted by $\epsilon_i$. We also say that $\epsilon_i$ starts and ends at $i$ and denote $\epsilon_i: s(\epsilon_i)=i \rightsquigarrow e(\epsilon_i)=i$. 

A \textbf{path} of length $t$ over $Q$ is a finite sequence $\gamma = (\gamma_1,\ldots,\gamma_t)$, where $t \in \mathbb{N}$, and $\gamma_1,\ldots,\gamma_t \in Q_1$ are arrows of $Q$, such that for every $i > 1$, $s(\gamma_{i+1}) = e(\gamma_i)$. The vertices $s(\gamma_1)$ and $e(\gamma_t)$, respectively, are called \textbf{start}\index{path!start} and \textbf{end}\index{path!end} of the path $\gamma$. We shall also use the following notation for paths: $\gamma = \gamma_1 \ldots \gamma_t : s(\gamma) \rightsquigarrow e(\gamma)$. When necessary to distinguish this concept from that of $\Aa$-paths to be defined below, we shall be calling $\gamma$ an \textbf{ordinary path} over $Q$. 

In all cases, we can denote the length of a path $\gamma$ by $l(\gamma)$.

Also, a path $\gamma$ is called an \textbf{oriented cycle}\index{oriented cycle} if it starts and ends at the same vertex. In the case where $Q$ has no oriented cycles, $Q$ is said to be an \textbf{acyclic}\index{quiver!acyclic} quiver.
\end{defi}

\subsection{\textit{Category of quivers}}
\label{subsec:cat_quiv}

There is a category, denoted by $\textbf{Quiv}$, whose class of objects is the class of all quivers. Given two quivers $Q = (Q_0,Q_1,s,e)$ and $Q' = (Q'_0,Q'_1,s',e')$, a \textbf{quiver morphism}\index{quiver!morphism} $f: Q \rightarrow Q'$ is a pair $f = (f_0,f_1)$, where $f_0: Q_0 \rightarrow Q'_0$ and $f_1: Q_1 \rightarrow Q'_1$ are functions satisfying $s' \circ f_1 = f_0 \circ s$ and $e' \circ f_1 = f_0 \circ e$. Note that, if $f_0$ and $f_1$ are surjective, then $f$ is an epimorphism in the categorical sense.

It will be worth noting that the category $\textbf{Quiv}$ has quotient objects. Let $Q$ be a quiver and let $\sim \subseteq Q_0 \times Q_0$ be an equivalence relation on the vertices of $Q$. Then we define the \textbf{quotient quiver}\index{quiver!quotient} $\overline{Q} = \frac{Q}{\sim}$. The set of vertices of $\overline{Q}$ is the quotient set $\frac{Q_0}{\sim}$, and given $\overline{a},\overline{b} \in \overline{Q}_0$, the number of arrows between these vertices is given by the following formula:

$$[\overline{a},\overline{b}]_{\overline{Q}} = \max \{[x,y]_Q: x \in \overline{a}, y \in \overline{b}\}$$ 

\subsection{\textit{Path Algebras}}
\label{subsec:path_algebras}

Here we will recall the usual concept of path algebra, since its generalization will be discussed below.

Let $Q$ be a quiver. Let $kQ$ be the $k$-vector space having as basis the set of paths over $Q$. We want to define an internal multiplication in $kQ$. By linearity, it is sufficient to define what is the product between two paths. This, by its turn, is given by the \textbf{composition} of two such paths, and is defined naturally using the idea of juxtaposition. Let us give more details: let $\epsilon_i$ be a path of length 0 at the vertex $i$, and let $\gamma$ be any path over $Q$. Then $\epsilon_i \gamma$ is defined to be $\gamma$ if $s(\gamma) = i$, and defined to be zero otherwise. Analogously, $\gamma \epsilon_i$ is defined to be $\gamma$ if $e(\gamma) = i$, and defined to be zero otherwise. Also, let $\gamma = \gamma_1 \ldots \gamma_t$ and $\delta = \delta_1 \ldots \delta_s$ be two paths over $Q$. Then $\gamma \delta$ is defined to be the path $\gamma_1 \ldots \gamma_t \delta_1 \ldots \delta_s$ if $e(\gamma) = s(\delta)$, and defined to be zero otherwise.

With that multiplication, $kQ$ is a $k$-algebra, called the \textbf{path algebra} over the quiver $Q$. 
 
When necessary to distinguish the present concept of path algebra from the generalized one to be discussed below, we shall say that $kQ$ is the \textbf{ordinary path algebra} \index{path algebra!ordinary} over $Q$.

Since composition of paths is clearly associative, $kQ$ will be an associative algebra. Also, since we are assuming $Q$ to be finite, $kQ$ will have an identity element, given by

$$1_{kQ} = \sum_{i \in Q_0} \epsilon_i$$

Moreover, $kQ$ has finite dimension if and only if $Q$ is an acyclic quiver.

Since the arrows of $Q$ can be seen as elements of $kQ$, it makes sense to consider the ideal of $kQ$ generated by all arrows of $Q$; we denote this ideal by $J$. Note that, in the case where $Q$ is finite and acyclic, $J$ coincides with the Jacobson radical of $kQ$.

\begin{defi}
Let $I$ be an ideal of $kQ$. It is said to be \textbf{admissible} provided there is a natural number $n \geq 2$ such that $J^n \subseteq I \subseteq J^2$.
\end{defi}

Another important concept to recall here is that of relations on a quiver.

\begin{defi}
Given a quiver $Q$, an \textbf{(ordinary or usual) relation}\index{relation} over $Q$ is a $k$-linear combination of paths over $Q$, all of them having length greater than or equal to 2, and all of them starting and ending at the same vertex.
\end{defi}

\begin{obs}
It is a basic result that every admissible ideal of $kQ$ is generated by a finite set of relations. And reciprocally, if $Q$ is acyclic, every finite set of relations generates an admissible ideal of $kQ$.
\end{obs}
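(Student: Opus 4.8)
The plan is to prove the two assertions separately, throughout exploiting the block decomposition $kQ = \bigoplus_{a,b \in Q_0} \epsilon_a (kQ) \epsilon_b$ coming from the orthogonal idempotents $\epsilon_a$: each summand is spanned by the paths from $a$ to $b$, and a relation is precisely a length-$\geq 2$ element lying in a single such summand. I also recall that $J^m$ is spanned by the paths of length $\geq m$, so that $kQ/J^n$ is spanned by the (finitely many, as $Q$ is finite) paths of length $<n$ and is therefore finite dimensional.

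For the forward direction, let $I$ be admissible, so that $J^n \subseteq I \subseteq J^2$ for some $n \geq 2$. First I would pass to $\overline{I} := I/J^n \subseteq kQ/J^n$, which is finite dimensional; since $J^n$ is a block ideal, the decomposition descends and $\overline{I}$ splits as $\bigoplus_{a,b} \overline{I}_{a,b}$. Choosing a basis of each block and lifting it into $\epsilon_a I \epsilon_b$ yields a finite family $\rho_1,\dots,\rho_m$ of relations in $I$ (each a finite combination of paths of length between $2$ and $n-1$ with common source $a$ and common target $b$) whose images span $\overline{I}$. Next I would note that $J^n$ is generated as an ideal by the finitely many paths $p_1,\dots,p_r$ of length exactly $n$, each of which is itself a relation since $n \geq 2$. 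I then claim $I = I'$, where $I' := \langle \rho_1,\dots,\rho_m,p_1,\dots,p_r\rangle$. The inclusion $I' \subseteq I$ is clear, as every generator lies in $I$. For the reverse inclusion, $I'$ contains $J^n$ (from the $p_j$) and, modulo $J^n$, contains the $\rho_i$, whose images already span $I/J^n$; hence $I' + J^n \supseteq I$, and since $J^n \subseteq I'$ this gives $I' \supseteq I$. Thus $I$ is generated by a finite set of relations.

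For the converse, suppose $Q$ is acyclic and let $I = \langle \rho_1,\dots,\rho_m\rangle$ with each $\rho_i$ a relation. The inclusion $I \subseteq J^2$ is immediate, since each $\rho_i$ is a combination of paths of length $\geq 2$ and $J^2$ is an ideal; this half uses no hypothesis on cycles. The other inclusion is where acyclicity enters: in a finite acyclic quiver no path may revisit a vertex (otherwise the intermediate segment would be an oriented cycle), so path lengths are bounded by $|Q_0| - 1$, whence $J^{|Q_0|} = 0$. Taking $n = \max(|Q_0|,2)$ gives $J^n = 0 \subseteq I \subseteq J^2$ with $n \geq 2$, so $I$ is admissible.

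The one genuine subtlety — the main obstacle — is the finite generation in the forward direction, because when $Q$ has oriented cycles the algebra $kQ$, and the ideal $I$ inside it, are infinite dimensional. The device that resolves this is to quotient by $J^n$ so as to land in the finite-dimensional algebra $kQ/J^n$, to treat the tail $J^n$ separately via its finitely many length-$n$ path generators, and to use the block decomposition to guarantee that the lifted generators are honest relations rather than arbitrary elements mixing different sources and targets.
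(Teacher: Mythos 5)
The paper offers no proof of this remark: it is quoted as ``a basic result'' and implicitly deferred to the standard references cited in Section~2 (e.g.\ Assem--Simson--Skowro\'nski, where it appears as the finite-generation lemma for admissible ideals). Your argument is correct and is essentially that standard one --- decompose via the idempotents $\epsilon_a$, pass to the finite-dimensional quotient $kQ/J^n$ to get finitely many lifted relations, handle $J^n$ by its length-$n$ paths, and use $J^{|Q_0|}=0$ for the acyclic converse. One small phrasing point: the splitting $\overline{I}=\bigoplus_{a,b}\epsilon_a\overline{I}\epsilon_b$ follows from $\overline{I}$ being a two-sided ideal together with $1=\sum_a \epsilon_a$ (write $x=\sum_{a,b}\epsilon_a x\epsilon_b$), not merely from $J^n$ being a block ideal; and the lifts land among paths of length $<n$ only after discarding their components in $J^n\subseteq I$, which is harmless.
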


\begin{obs}

In practical situations, it is customary to define an algebra $A$ by giving a quiver $Q$ and a finite set of relations $R$ over $Q$. This means that $A$ is defined by $A = kQ/(R)$, where $(R)$ is the ideal generated by $R$. Sometimes it is also said that $A$ is the path algebra over $Q$ bound by $R$, or that $A$ is a \textbf{bound path algebra}.

\end{obs}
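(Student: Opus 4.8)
The final statement is a \emph{Remark} that fixes terminology rather than asserting a theorem, so strictly speaking there is nothing to prove: it records the customary practice of presenting an algebra as $A = kQ/(R)$ and names such an $A$ a \textbf{bound path algebra}. The only points one could conceivably verify are matters of well-definedness, and these are immediate. Since $(R)$ is by construction the two-sided ideal of $kQ$ generated by the finite set $R$, the quotient $kQ/(R)$ is automatically a well-defined associative $k$-algebra, inheriting associativity and the identity $1_{kQ} = \sum_{i \in Q_0} \epsilon_i$ from $kQ$. So no computation is required for the statement as worded.

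The one substantive point worth recording is how this construction sits inside the framework of the paper, and this is what I would spell out. First I would invoke the preceding remark: when $Q$ is acyclic and $R$ is a finite set of relations, the ideal $(R)$ is admissible, i.e. $J^n \subseteq (R) \subseteq J^2$ for some $n \geq 2$. It then follows that $A = kQ/(R)$ is finite dimensional, since $kQ$ itself is finite dimensional for acyclic $Q$, and that the displayed presentation is an admissible one. This places $A$ squarely among the finite dimensional basic $k$-algebras that are the standing objects of study, so that Gabriel's theorem and the later simplification results indeed apply to it.

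Because there is no nontrivial claim, there is no genuine obstacle to overcome; the only thing to guard against is a hidden finiteness hypothesis. The subtlety I would flag is precisely why the acyclicity of the preceding remark is needed: without it, the ideal $(R)$ generated by a set of relations need not be admissible and $A$ need not be finite dimensional. I would therefore make explicit that $Q$ is always taken so that the standing assumptions on $A$ are preserved, ensuring that the label \textbf{bound path algebra} consistently denotes an object in the category under consideration.
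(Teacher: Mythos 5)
Your reading is correct: the statement is a terminological remark, not a theorem, and the paper accordingly offers no proof of it; your observation that $kQ/(R)$ is automatically a well-defined $k$-algebra (with admissibility of $(R)$ and finite-dimensionality guaranteed in the acyclic case by the preceding remark) is exactly the content implicit in the paper's treatment. Nothing is missing and nothing more needs to be said.
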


\subsection{\textit{Algebras as quotients of path algebras}}
\label{subsec:gabriel}

In this subsection we recall a well-known theorem due to P. Gabriel regarding path algebras, since the results that follow in this paper will use its ideas.

The theorem deals with the problem of assigning a quiver $Q_A$ to a given algebra $A$ in such a way that $A$ is a quotient of the path algebra $kQ_A$ by an admissible ideal. Formally, we have the following statement:

\begin{teo}
\label{th:gabriel_algebra}
Let $A$ be a finite-dimensional basic algebra over an algebraically closed field $k$. Then there is a quiver $Q_A$ and an admissible ideal $I$ of $kQ_A$ such that $A \cong kQ_A/I$. Moreover, $Q_A$ is uniquely determined by $A$.
\end{teo}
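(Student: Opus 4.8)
The plan is to prove this classical theorem of Gabriel in the usual two-stage fashion: first construct the quiver $Q_A$ intrinsically from the algebra $A$, and then exhibit an isomorphism $A \cong kQ_A/I$ for a suitable admissible ideal $I$. Since $A$ is a finite-dimensional basic $k$-algebra over an algebraically closed field, its radical $\rad A$ is nilpotent, and $A/\rad A$ is a product of copies of $k$ (using basicness together with $k = \overline{k}$). The idempotent structure is the starting point: I would fix a complete set of primitive orthogonal idempotents $e_1,\ldots,e_n$ of $A$, which are in bijection with the simple $A$-modules and hence with the vertices I want to define.

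First I would define $(Q_A)_0 = \{1,\ldots,n\}$, one vertex per primitive idempotent $e_i$. For the arrows, the key idea is to read them off from the square of the radical: set the number of arrows from $i$ to $j$ to be $\dimk e_j\,(\rad A / \rad^2 A)\,e_i$, so that $[i,j]_{Q_A}$ counts a basis of the $(j,i)$-component of $\rad A/\rad^2 A$. This definition is forced (up to the bookkeeping of source and target conventions) and is precisely what makes $Q_A$ depend only on $A$, which will later give uniqueness. The next step is to build the algebra homomorphism $\varphi: kQ_A \to A$: send each $\epsilon_i$ to $e_i$, and send each arrow $\alpha \in Q_A(i,j)$ to a chosen element $a_\alpha \in e_j (\rad A) e_i$ whose class modulo $\rad^2 A$ realizes the corresponding basis vector. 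By the universal/multiplicative properties of the path algebra, this data extends uniquely to a $k$-algebra homomorphism.

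The technical heart is to show that $\varphi$ is surjective and that $I := \Ker \varphi$ is admissible. Surjectivity follows from a filtration argument: the images of the $e_i$ generate $A/\rad A$, and the images of length-$\ell$ paths generate $\rad^\ell A / \rad^{\ell+1} A$; since $\rad A$ is nilpotent, induction on the radical layers shows $\varphi$ hits all of $A$. For admissibility one must check $J^m \subseteq I \subseteq J^2$ for some $m$: the right-hand inclusion says $\varphi$ kills no arrows and no constant terms, which holds because the arrow images were chosen linearly independent modulo $\rad^2 A$ (so no nonzero element of $J/J^2$ lies in the kernel), and the left-hand inclusion follows from nilpotency of $\rad A$, as $\varphi(J^m) \subseteq \rad^m A = 0$ for $m$ large.

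I expect the main obstacle to be the careful verification that $I \subseteq J^2$, i.e.\ that the kernel contains no elements of degree $0$ or $1$. This rests on the fact that $\varphi$ induces an isomorphism $J/J^2 \xrightarrow{\sim} \rad A/\rad^2 A$, which in turn is exactly where the choice of arrow representatives as a basis of $\rad A/\rad^2 A$ is used; getting the idempotent and source/target bookkeeping consistent here is the delicate point. Finally, for uniqueness I would argue that any quiver $Q'$ with $A \cong kQ'/I'$ for an admissible $I'$ must have vertex set indexed by the simple modules of $A$ and arrow multiplicities equal to $\dimk e_j(\rad A/\rad^2 A)e_i$, since the admissibility condition $J'^{\,2} \subseteq I' \subseteq $ forces $kQ'/I'$ to have the same semisimple quotient and the same $\rad/\rad^2$ as $A$; hence $Q' \cong Q_A$, proving $Q_A$ is determined by $A$ alone.
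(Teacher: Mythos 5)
Your proposal is correct and is the standard proof of Gabriel's theorem; it matches exactly the construction the paper itself recalls (the paper does not prove this classical result --- it cites it and, in the remark following the statement, sketches the same quiver construction you use: vertices given by a complete set of primitive orthogonal idempotents, and arrows from $i$ to $j$ counted by $\dimk$ of the corresponding component of $\rad A/\rad^2 A$). One small slip to fix in your uniqueness paragraph: the admissibility condition should read $J'^{\,m} \subseteq I' \subseteq J'^{\,2}$ (your text has the truncated and reversed ``$J'^{\,2} \subseteq I' \subseteq$''), which is what lets you identify $\rad(kQ'/I') = J'/I'$ and $\rad^2(kQ'/I') = J'^{\,2}/I'$ and hence recover the arrow multiplicities.
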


The quiver $Q_A$ is said to be the \textbf{Gabriel quiver}\index{quiver!Gabriel} or \textbf{ordinary quiver}\index{quiver!ordinary} of $A$.

\begin{obs} 
Let us just recall some ideas from the proof of Theorem~\ref{th:gabriel_algebra}, since they will be needed later. Let $E = \{e_1,\ldots,e_n\}$ be a complete set of pairwise orthogonal primitive idempotent elements of $A$. The set of vertices of $Q_A$ is taken to be $E$. Moreover, if $e_i,e_j \in E$, the number of arrows of $Q_A$ of the form $e_i \rightarrow e_j$ is equal to the natural number $\dimk \frac{e_i(\rad A)e_j}{e_i(\rad^2 A)e_j}$. Even though our definition of $Q_A$ apparently depends on the choice of the set $E$, it can be shown that another choice would produce a quiver which is just isomorphic to $Q_A$.
\end{obs}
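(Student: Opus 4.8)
The plan is to reconstruct $A$ from its radical filtration by building an explicit surjective algebra homomorphism $\varphi \colon kQ_A \to A$ whose kernel is admissible, and then to argue that the combinatorial data of $Q_A$ are intrinsic invariants of $A$. First I would fix a complete set $E = \{e_1,\ldots,e_n\}$ of pairwise orthogonal primitive idempotents with $\sum_i e_i = 1$ and take $E$ as the vertex set of $Q_A$, with exactly $\dimk \frac{e_i(\rad A)e_j}{e_i(\rad^2 A)e_j}$ arrows from $e_i$ to $e_j$, as in the Remark. Writing $r = \rad A$, which is nilpotent since $A$ is finite dimensional, for each ordered pair $(i,j)$ I would select elements of $e_i r e_j$ whose classes form a $k$-basis of $e_i(r/r^2)e_j$, and declare these to be the images of the corresponding arrows. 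Setting $\varphi(\epsilon_i) = e_i$ and extending multiplicatively and $k$-linearly yields a well-defined homomorphism, because each chosen image lies in the block $e_i A e_j$ dictated by the start and end of the arrow, matching the composition rule for paths.

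Next I would prove surjectivity by a Nakayama-type iteration. Since $A$ is basic and $k$ is algebraically closed, $A/r \cong k^n$, so the idempotents already account for $A$ modulo $r$; thus it suffices to show $r \subseteq \varphi(kQ_A)$. By the choice of arrow images one has $\varphi(J) + r^2 = r$, where $J$ denotes the arrow ideal of $kQ_A$, and iterating this relation gives $\varphi(J^t) + r^{t+1} = r^t$ for all $t \geq 1$. If $r^m = 0$, descending induction on $t$ collapses these to $\varphi(J^t) = r^t$; in particular $r = \varphi(J)$, so $\varphi$ is onto.

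Setting $I = \Ker \varphi$, admissibility means $J^m \subseteq I \subseteq J^2$. The first inclusion is immediate from $\varphi(J^m) = r^m = 0$. For $I \subseteq J^2$ I would check that $\varphi$ is injective in lengths $0$ and $1$: the $e_i$ are linearly independent modulo $r$ and the chosen arrow images are linearly independent modulo $r^2$ (being lifts of a basis of $r/r^2$), so no nonzero combination of paths of length $\leq 1$ can lie in $I$; hence $I \subseteq J^2$.

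Finally, uniqueness reduces to showing that $|E|$ and the arrow numbers do not depend on the chosen idempotents. The number of vertices equals the number of isomorphism classes of simple $A$-modules, an invariant of $A$. For the arrows, any two complete sets of pairwise orthogonal primitive idempotents are conjugate by a unit of $A$; such a conjugation fixes $r$ and $r^2$ and therefore induces, up to a relabeling $\sigma$ of the idempotents, isomorphisms $e_i(r/r^2)e_j \cong e_{\sigma(i)}(r/r^2)e_{\sigma(j)}$, so the dimensions defining the arrow numbers are preserved and $Q_A$ is determined up to isomorphism. The hard part will be the surjectivity-and-admissibility core, where the radical filtration and the nilpotency of $r$ must be combined carefully so that the same choice of arrow lifts simultaneously yields $\varphi(J) = r$ and the degree-$\leq 1$ injectivity that forces $I \subseteq J^2$.
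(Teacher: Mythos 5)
Your proposal is correct and takes essentially the same route as the construction the paper is recalling (the standard textbook proof of Theorem~\ref{th:gabriel_algebra}): define $\varphi$ on arrows by lifting a basis of $e_i\bigl(\rad A/\rad^2 A\bigr)e_j$, get surjectivity from the radical filtration and nilpotency of $\rad A$, get $\Ker\varphi\subseteq J^2$ from linear independence in degrees $0$ and $1$, and get uniqueness from the conjugacy of complete sets of pairwise orthogonal primitive idempotents. The Remark in the paper is only a recollection of exactly this construction, so there is nothing further to reconcile.
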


\section{Generalized Path Algebras}
\label{sec:simplifying pa's}

\subsection{\textit{Definition of Generalized Path Algebra}}
\label{subsec:gpa}

The concept of generalized path algebra that we shall deal with here is the one introduced in a 2000 article by F. U. Coelho and S. X. Liu \cite{CLiu}. Besides giving the definition and basic properties, their interest there was on ring-theoretic properties, namely, analysing when a generalized path algebra is noetherian or prime. They also studied some uniqueness results (that is, what can we tell when two such algebras are isomorphic).

\begin{defi} 

Let $\Gamma= (\Gamma_0,\Gamma_1,s,e)$ be a quiver. Also, let $\mathcal{A}=(A_i)_{i \in \Gamma_0}$ be a family of $k$-algebras, one for each vertex of $\Gamma$.

\begin{itemize}

\index{$\mathcal{A}$-path}

\item An \textbf{$\mathcal{A}$-path of length 0} over $\Gamma$ is an element of the set 
$\bigcup_{i \in \Gamma_0} A_i$.

\item For $n > 0$, an \textbf{$\mathcal{A}$-path of length n} over $\Gamma$ is a sequence of the form
$$a_1 \beta_1 a_2 \ldots a_n \beta_n a_{n+1}$$
where $\beta_1 \ldots \beta_n$ is an ordinary path over $\Gamma$, $a_i \in A_{s(\beta_i)}$ if $i \leq n$, and $a_{n+1} \in A_{e(\beta_n)}$

\item We denote by $k[\Gamma,\mathcal{A}]$ the $k$-vector space spanned by all $\mathcal{A}$-paths over $\Gamma$.

\item The \textbf{generalized path algebra} over $\Gamma$ and $\mathcal{A}$ is the quotient vector space $k(\Gamma,\mathcal{A})=k[\Gamma,\mathcal{A}]/M$, where $M$ is the subspace generated by all elements of the form

$$(a_1 \beta_1 \ldots \beta_{j-1}(a^1_j+ \ldots+a^m_j)\beta_j a_{j+1} \ldots \beta_n a_{n+1}) - \sum_{l=1}^m (a_1 \beta_1 \ldots \beta_{j-1} a_j^l \beta_j \ldots \beta_n a_{n+1})$$

or, for $\lambda \in k$,

$$(a_1 \beta_1 \ldots \beta_{j-1}( \lambda a_j) \beta_j a_{j+1} \ldots \beta_n a_{n+1})- \lambda.(a_1 \beta_1 \ldots \beta_{j-1} a_j \beta_j a_{j+1} \ldots \beta_n a_{n+1})$$

\item The multiplication in $k(\Gamma,\mathcal{A})$ is induced by the of multiplication of the $A_i$'s and by composition of paths. Namely, it is defined by linearity and the following rule:

$$(a_1 \beta_1 \ldots \beta_n a_{n+1})(b_1 \gamma_1 \ldots \gamma_m b_{m+1}) = a_1 \beta_1 \ldots \beta_n (a_{n+1} b_1) \gamma_1 \ldots \gamma_m b_{m+1}$$

if $e(\beta_n) = s(\gamma_1)$, and 

$$(a_1 \beta_1 \ldots \beta_n a_{n+1})(b_1 \gamma_1 \ldots \gamma_m b_{m+1}) = 0 $$

otherwise.

\end{itemize}

\end{defi}

\begin{obs}

It should be easy to see that the ordinary path algebras are a particular case of generalized path algebras, simply by taking $A_i = k$ for every $i \in \Gamma_0$.

\end{obs}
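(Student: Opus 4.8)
The plan is to exhibit an explicit isomorphism of $k$-algebras $k(\Gamma,\mathcal{A}) \xrightarrow{\sim} k\Gamma$ under the hypothesis $A_i = k$ for all $i \in \Gamma_0$. The guiding observation is that when every $A_i$ equals the base field, each inserted element $a_i$ is a scalar, so the multilinearity and scalar relations generating $M$ allow one to pull all the scalars out of an $\mathcal{A}$-path and collect them into a single coefficient. Concretely, I would first show that modulo $M$ one has
$$ a_1 \beta_1 a_2 \cdots a_n \beta_n a_{n+1} \equiv (a_1 a_2 \cdots a_{n+1})\,(1_k \beta_1 1_k \cdots 1_k \beta_n 1_k), $$
and likewise that a length-$0$ path $a \in A_i$ satisfies $a \equiv a\,(1_k)_i$, where $(1_k)_i$ denotes the unit of $A_i = k$. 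This reduces the spanning set of $k(\Gamma,\mathcal{A})$ to the classes $\overline{p}$ of the normalized $\mathcal{A}$-paths in which every inserted scalar is $1_k$, one such class for each ordinary path $p$ over $\Gamma$ (including the length-$0$ paths $\epsilon_i$).

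The second step is to define $\Phi \colon k(\Gamma,\mathcal{A}) \to k\Gamma$ on this spanning set by $\Phi(\overline{p}) = p$, extended by linearity. Since the ordinary paths form a $k$-basis of $k\Gamma$ by definition, $\Phi$ is surjective, and it is bijective once the classes $\{\overline{p}\}$ are shown to be linearly independent in $k(\Gamma,\mathcal{A})$. Establishing this independence is where the real content lies: I would build the candidate linear map $k[\Gamma,\mathcal{A}] \to k\Gamma$ sending $a_1\beta_1\cdots\beta_n a_{n+1} \mapsto (a_1\cdots a_{n+1})\,\beta_1\cdots\beta_n$ and $a \in A_i \mapsto a\,\epsilon_i$, check that every generator of $M$ lies in its kernel, and thereby obtain the induced map $\Phi$ on the quotient; its inverse is the evidently well-defined map $k\Gamma \to k(\Gamma,\mathcal{A})$, $p \mapsto \overline{p}$, which exists because the ordinary paths freely span $k\Gamma$.

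Finally, I would verify that $\Phi$ respects the algebra structure. For the multiplication rule one computes, for composable normalized paths, $\overline{p}\cdot\overline{q} = \overline{1_k \beta_1 \cdots \beta_n (1_k 1_k) \gamma_1 \cdots \gamma_m 1_k} = \overline{pq}$ when $e(\beta_n) = s(\gamma_1)$, and $\overline{p}\cdot\overline{q} = 0$ otherwise, matching the composition of paths in $k\Gamma$; the length-$0$ cases are checked the same way. The unit is handled by noting that $\sum_{i \in \Gamma_0} (1_k)_i$ is the identity of $k(\Gamma,\mathcal{A})$ and is sent by $\Phi$ to $\sum_{i \in \Gamma_0}\epsilon_i = 1_{k\Gamma}$. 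The main (and essentially only) obstacle is the bookkeeping around the relations $M$: one must be careful that the generators as written, which explicitly display arrows $\beta_j$, are read so as to also encode the additive and scalar structure inside each $A_i$ for the length-$0$ paths, so that the normalization above and the linear independence of the $\overline{p}$ genuinely hold.
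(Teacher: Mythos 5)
Your proof is correct: the paper states this remark without any proof, and your explicit isomorphism $k(\Gamma,\mathcal{A}) \cong k\Gamma$ --- normalizing the scalars out of each $\mathcal{A}$-path modulo $M$, inducing the linear map on the quotient by checking that the generators of $M$ die, exhibiting the inverse $p \mapsto \overline{p}$, and verifying compatibility with multiplication and the unit $\sum_{i}1_{A_i} \mapsto \sum_i \epsilon_i$ --- is exactly the verification the paper treats as evident. Your closing caveat is also well taken: the generators of $M$ as displayed must be read as covering the boundary slots and the length-$0$ paths (so that the linear structure of each $A_i$ is identified inside $k(\Gamma,\mathcal{A})$), and this is indeed the intended interpretation, without which $\sum_i 1_{A_i}$ would not even act as an identity and the classes $\overline{p}$ would not be linearly independent.
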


\begin{obs}

In \cite{Kuls17}, J. Külshammer introduced a generalization of the concept of species, the so-called pro-species. As it is mentioned in that article, the generalized path algebras are tensor algebras over particular cases of pro-species, namely those which have algebras on each vertex and free bimodules on each arrow. We refer to \cite{Kuls17} for further details on this construction.

\end{obs}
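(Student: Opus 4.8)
The statement to establish is that, for any quiver $\Gamma$ and family $\Aa = (A_i)_{i \in \Gamma_0}$, the generalized path algebra $k(\Gamma, \Aa)$ is isomorphic to the tensor algebra of a pro-species whose vertex data are the $A_i$ and whose arrow data are free bimodules. The plan is to make this precise and then build an explicit isomorphism. First I would recall the tensor-algebra construction: setting $B = \prod_{i \in \Gamma_0} A_i$ and assigning to each arrow $\beta$ the \emph{free} $A_{s(\beta)}$-$A_{e(\beta)}$-bimodule on one generator, namely $M_\beta = A_{s(\beta)} \otimes_k A_{e(\beta)}$ with $\beta = 1 \otimes 1$, the relevant pro-species has tensor algebra $T_B(M) = B \oplus M \oplus (M \otimes_B M) \oplus \cdots$, where $M = \bigoplus_{\beta \in \Gamma_1} M_\beta$ is regarded as a $B$-$B$-bimodule with the idempotents $\epsilon_i \in B$ acting by selecting source and target. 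The goal is then $k(\Gamma, \Aa) \cong T_B(M)$.

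Next I would define a $k$-linear map $\Phi \colon k[\Gamma, \Aa] \to T_B(M)$ on $\Aa$-paths by distributing the algebra entries across the tensor factors, e.g. sending $a_1 \beta_1 a_2 \beta_2 a_3 \cdots \beta_n a_{n+1}$ to $(a_1 \otimes a_2) \otimes_B (1 \otimes a_3) \otimes_B \cdots \otimes_B (1 \otimes a_{n+1})$ inside $M_{\beta_1} \otimes_B \cdots \otimes_B M_{\beta_n}$, and sending a length-$0$ path in $A_i$ to the corresponding component of $B$. The first task is to check that $\Phi$ annihilates the subspace $M$ appearing in the definition of $k(\Gamma, \Aa)$: the additivity and scalar relations there say exactly that each slot is $k$-bilinear in its entries, which holds automatically in the tensor product, so $\Phi$ factors through $k(\Gamma, \Aa)$.

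I would then verify that $\Phi$ is multiplicative and bijective. In the composable case $e(\beta_n) = s(\gamma_1)$, juxtaposition of $\Aa$-paths corresponds to $\otimes_B$, the merged entry $a_{n+1} b_1$ being absorbed by the $B$-balancing; in the non-composable case both products vanish, since $\epsilon_{e(\beta_n)} \epsilon_{s(\gamma_1)} = 0$ forces the relevant tensor over $B$ to be zero. Bijectivity I would obtain degree by degree from the central computation that $M_\beta \otimes_B M_\gamma \cong A_{s(\beta)} \otimes_k A_{e(\beta)} \otimes_k A_{e(\gamma)}$ when $e(\beta) = s(\gamma)$ (using $A_{e(\beta)} \otimes_{A_{e(\beta)}} A_{e(\beta)} \cong A_{e(\beta)}$) and $M_\beta \otimes_B M_\gamma = 0$ otherwise; iterating, $M^{\otimes_B n} \cong \bigoplus A_{s(\beta_1)} \otimes_k \cdots \otimes_k A_{e(\beta_n)}$ with the sum over ordinary paths $\beta_1 \cdots \beta_n$ of length $n$, which is precisely the degree-$n$ part of $k(\Gamma, \Aa)$ modulo $M$.

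The step I expect to be the main obstacle is this last identification: one must confirm that the $B$-balancing introduces no relations beyond the $k$-multilinear ones already imposed by $M$, and this is exactly where freeness of the $M_\beta$ is indispensable, since a non-free bimodule would contribute extra relations and destroy the isomorphism. The accompanying bookkeeping — tracking which idempotent acts on each side and handling length-$0$ and mixed-length factors uniformly — is the part needing the most care; once the degree-wise identification is in hand, well-definedness and multiplicativity are routine.
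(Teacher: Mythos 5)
The paper offers no proof of this remark at all: it is a pure citation, deferring the identification of generalized path algebras with tensor algebras of pro-species to K\"ulshammer's article \cite{Kuls17}. Your proposal supplies a self-contained verification, and it is correct; it is essentially the argument that \cite{Kuls17} has in mind. Taking $B = \prod_{i \in \Gamma_0} A_i$ (which equals $\bigoplus_i A_i$ since $\Gamma_0$ is finite) and $M_\beta = A_{s(\beta)} \otimes_k A_{e(\beta)}$, your key computation is the right one: $M_\beta \otimes_B M_\gamma \cong A_{s(\beta)} \otimes_k A_{e(\beta)} \otimes_k A_{e(\gamma)}$ when $e(\beta) = s(\gamma)$, and $M_\beta \otimes_B M_\gamma = 0$ otherwise because the orthogonal idempotents $\epsilon_{e(\beta)}, \epsilon_{s(\gamma)} \in B$ annihilate the balanced tensor. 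Iterating gives the degree-$n$ component of the tensor algebra as a sum over ordinary paths of length $n$, and this matches the degree-$n$ component of $k(\Gamma,\Aa)$ exactly because the subspace quotiented out in the paper's definition consists precisely of the $k$-multilinearity relations in the algebra slots, so each graded piece is the tensor product $A_{s(\beta_1)} \otimes_k A_{e(\beta_1)} \otimes_k \cdots \otimes_k A_{e(\beta_n)}$ by the universal property. Your checks of multiplicativity (the $B$-balancing absorbs the merged entry $a_{n+1}b_1$; orthogonality kills non-composable products) are also sound. Two minor points of presentation: first, avoid the notational clash in which $M$ denotes both the paper's subspace of relations and your $B$-bimodule $\bigoplus_\beta M_\beta$; second, to justify the word ``pro-species'' in K\"ulshammer's sense you should record that each $M_\beta$ is projective (indeed free) as a left and as a right module, which is immediate for $A_{s(\beta)} \otimes_k A_{e(\beta)}$ but is part of the definition being invoked.
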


Note that the generalized path algebra $k(\Gamma,\Aa)$ is an associative algebra. And since we are assuming the quivers to be finite, it also has an identity element, which is equal to $\sum_{i \in \Gamma_0} 1_{A_i}$. Finally, it is easy to observe that $k(\Gamma,\Aa)$ is finite-dimensional over $k$ if and only if so are the algebras $A_i$ and if $\Gamma$ is acyclic. 

\subsection{\textit{Generalized bound path algebras (gbp-algebras)}}
\label{subsec:gbpa}

In order to obtain the results that will follow, we need to extend our concept of generalized path algebras to allow them to have relations. In doing so, these algebras will be called \textbf{generalized bound path algebras}, here abbreviated as \textbf{gbp-algebras}. 

The idea of taking the quotient of a generalized path algebra by an ideal of relations has already been studied by Li Fang (see \cite{FLi1} for example). However, the concept we deal with here is slightly different.

\begin{defi}
\label{def:relation}

Let $k(\Gamma,\mathcal{A})$ be a generalized path algebra, where $\Gamma$ is a quiver and $\mathcal{A} = \{k\Sigma_i/\Omega_i : i \in \Gamma_0\}$ is a family of bound path algebras (here $\Sigma_i$ is a quiver and $\Omega_i$ is an admissible ideal of $k\Sigma_i$). 

Let $I$ be a finite set of relations over $\Gamma$ which generates an admissible ideal. Then we consider the following subset of $k(\Gamma,\Aa)$:

\begin{align*}
\Aa(I)&= \left\{ \sum_{i = 1}^t \lambda_i  \beta_{i1} \overline{\gamma_{i1}} \beta_{i2} \ldots \overline{\gamma_{i(m_i-1)}} \beta_{im_i} : \right.\\
&\left. \sum_{i = 1}^t \lambda_i \beta_{i1} \ldots \beta_{im_i} \text{  is a relation in } I 
\text{ and }\gamma_{ij}\text{ is a path in }\Sigma_{e(\beta_{ij})} \right\}
\end{align*}

Then the quotient $\frac{k(\Gamma,\mathcal{A})}{(\Aa(I))}$ is said to be a \textbf{generalized bound path algebra} (or \textbf{gbp-algebra}). To simplify the notation, we may also write $\frac{k(\Gamma,\mathcal{A})}{(\Aa(I))}=k(\Gamma,\Aa,I)$. When the context is clear, we may denote the set $\Aa(I)$ simply by $I$.
\end{defi}

\subsection{\textit{Realizing a gbp-algebra as a bound path algebra}}
\label{subsec:gen_icnlp}

Since a given gbp-algebra is an algebra in particular, it makes sense to apply Theorem \ref{th:gabriel_algebra} to it, and obtain an ordinary quiver bound by a set of relations. This is exactly the content of Theorem~\ref{th:gen_icnlp} below, which is the main result of this subsection.

Theorem~\ref{th:gen_icnlp} will be a generalization of a 2008 result by Ibáñez Cobos et al. \cite{ICNLP}. We recall the latter below. Roughly speaking, it describes the ordinary quiver with relations of a given generalized path algebra.

Let $\Lambda = k(\Gamma,\mathcal{A})$ be a generalized path algebra, with $\mathcal{A}=\{A_i : i \in \Gamma_0\}$. We will also suppose without loss of generality that $\Gamma_0 = \{1,2,\ldots,n\}$, in order to make notation simpler. 

Then, by Theorem~\ref{th:gabriel_algebra}, there is, for each $i$, a quiver $\Sigma_i$ and an admissible ideal $\Omega_i$ of $k\Sigma_i$ such that $A_i \cong k\Sigma_i/\Omega_i$.

In this context we define a quiver denoted by $\Gamma[\Sigma_1,\ldots,\Sigma_n]$ in the following way:

\begin{itemize}
\item The set of vertices of $\Gamma[\Sigma_1,\ldots,\Sigma_n]$ is $\bigcup_{i \in \Gamma_0} (\Sigma_i)_0$. 

\item If $a$ is a vertex of $\Sigma_i$ and $b$ is a vertex of $\Sigma_j$, then the number of arrows of the form $a \rightarrow b$ in $\Gamma[\Sigma_1,\ldots,\Sigma_n]$ is equal to the number of arrows of the form $a \rightarrow b$ in $\Sigma_i$ if $i = j$, and is equal to the number of arrows of the form $i \rightarrow j$ in $\Gamma$ if $i \neq j$.
\end{itemize}

As we are just about to see, the quiver $\Gamma[\Sigma_1,\ldots,\Sigma_n]$ coincides with the Gabriel quiver of the generalized path algebra $\Lambda$:

\begin{teo}(\cite{ICNLP},3.3) \label{th:icnlp}
With the hypothesis and notations from above, if $\Gamma$ is acyclic then
$$\Lambda \doteq k(\Gamma,\mathcal{A}) \cong \frac{\Gamma[\Sigma_1,\ldots,\Sigma_n]}{(\Omega_1,\ldots,\Omega_n)}$$
\end{teo}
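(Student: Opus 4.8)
The plan is to construct an explicit surjective algebra homomorphism from the bound path algebra onto $\Lambda$ and then to show it is an isomorphism by a dimension count. Write $Q = \Gamma[\Sigma_1,\ldots,\Sigma_n]$ and $B = kQ/(\Omega_1,\ldots,\Omega_n)$. Recall that the arrows of $Q$ are of two kinds: the \emph{internal} arrows, which are the arrows of each $\Sigma_i$ (these occur only between vertices lying in the same $\Sigma_i$), and the \emph{connecting} arrows, one arrow $a \to b$ for each triple consisting of an arrow $\beta\colon i\to j$ of $\Gamma$ (with $i\neq j$) and a pair of vertices $a\in(\Sigma_i)_0$, $b\in(\Sigma_j)_0$; I denote the latter by $(\beta,a,b)$. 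Using that $kQ$ is free on the quiver $Q$, I define a homomorphism $\phi\colon kQ\to\Lambda$ on generators by $\phi(\epsilon_a)=e_a$ (the idempotent of $A_i\cong k\Sigma_i/\Omega_i$ attached to the vertex $a\in\Sigma_i$, viewed as a length-$0$ $\mathcal{A}$-path), $\phi(\sigma)=\overline{\sigma}$ for an internal arrow $\sigma$ of $\Sigma_i$ (its class in $A_i$), and $\phi((\beta,a,b))=e_a\,\beta\,e_b$ (a length-$1$ $\mathcal{A}$-path). Since the $e_a$ are pairwise orthogonal idempotents summing to $1_\Lambda$ and each arrow image is correctly cut by the idempotents at its source and target, $\phi$ is a well-defined algebra homomorphism.

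Next I would check that $\phi$ annihilates the defining ideal. Each generator of $(\Omega_1,\ldots,\Omega_n)$ is a relation $\omega\in\Omega_i$, i.e.\ a $k$-combination of paths of $\Sigma_i$; on these $\phi$ agrees with the canonical projection $k\Sigma_i\to A_i$, so $\phi(\omega)=\overline{\omega}=0$. Hence $\phi$ factors through an algebra homomorphism $\overline{\phi}\colon B\to\Lambda$. Surjectivity of $\overline{\phi}$ is straightforward: its image is a subalgebra containing every $e_a$ and every $\overline{\sigma}$, hence all of each $A_i$ (as $A_i$ is generated by the classes of the vertices and arrows of $\Sigma_i$); it also contains each $e_a\beta e_b$, and since $\beta = \bigl(\sum_a e_a\bigr)\beta\bigl(\sum_b e_b\bigr)$ this forces every length-$1$ $\mathcal{A}$-path $a\beta b$ into the image; composing, every $\mathcal{A}$-path lies in the image.

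It then remains to prove injectivity, and here I would argue by comparing dimensions, which is legitimate because acyclicity of $\Gamma$ (together with finite-dimensionality of the $A_i$) makes both algebras finite-dimensional. On the one hand, a $k$-basis of $\Lambda$ is given by the $\mathcal{A}$-paths $a_1\beta_1 a_2\cdots\beta_m a_{m+1}$ where $\beta_1\cdots\beta_m$ runs over the (finitely many) paths of $\Gamma$, say visiting vertices $i_0,\ldots,i_m$, and each $a_l$ runs over a fixed basis of $A_{i_{l-1}}$; thus $\dimk\Lambda = \sum_{p}\prod_{l=0}^{m}\dimk A_{i_l}$, the sum over paths $p=\beta_1\cdots\beta_m$ of $\Gamma$. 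On the other hand, every path of $Q$ factors uniquely as $p_0\,c_1\,p_1\cdots c_m\,p_m$ with the $c_l$ connecting arrows and the $p_l$ (possibly trivial) internal paths, and the generators of $(\Omega_1,\ldots,\Omega_n)$ involve only internal arrows, so modulo the ideal each segment $p_l$ may be rewritten as a combination of the elements of a chosen basis of $A_{i_l}$. Consequently $B$ is spanned by the products $p_0 c_1\cdots c_m p_m$ in which each $p_l$ is a basis element of $A_{i_l}$; for a fixed underlying $\Gamma$-path the connecting arrow $c_l=(\beta_l, e(p_{l-1}), s(p_l))$ is then uniquely determined by the endpoints of the adjacent segments, so the number of such spanning products is at most $\sum_p\prod_{l}\dimk A_{i_l}=\dimk\Lambda$. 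Hence $\dimk B\le\dimk\Lambda\le\dimk B$, forcing equality and making the surjection $\overline{\phi}$ an isomorphism.

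The main obstacle is precisely the injectivity step: one must keep tight control of the ideal $(\Omega_1,\ldots,\Omega_n)$ inside $kQ$ and verify that it only identifies paths within a single internal $\Sigma_i$-segment, never across a connecting arrow, so that the reduction of each segment to a basis of $A_{i_l}$ is independent and the bookkeeping of the determined connecting arrows yields exactly the count $\dimk\Lambda$. This is also where acyclicity of $\Gamma$ is essential: it guarantees finitely many $\Gamma$-paths (so both dimensions are finite), bounds the number of connecting arrows in any path of $Q$, and, together with admissibility of each $\Omega_i$, ensures that $(\Omega_1,\ldots,\Omega_n)$ is an admissible ideal of $kQ$ so that $B$ is genuinely a bound path algebra.
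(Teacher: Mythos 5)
Your proof is correct, and it builds the same homomorphism as the paper but finishes by a genuinely different argument. The map $\phi$ you define on generators is precisely the morphism $g: k\Gamma[\Sigma_1,\ldots,\Sigma_n] \rightarrow k(\Gamma,\mathcal{A})$ that the paper recalls from \cite{ICNLP} (trivial paths $\epsilon_a \mapsto e_a$, internal arrows to their classes in $A_i$, connecting arrows to $e_a \beta e_b$), and your surjectivity argument is the standard one. The difference is in the key step: the paper leans on the computation in \cite{ICNLP} that $\Ker g = (\Omega_1,\ldots,\Omega_n)$, an exact identification of the kernel, whereas you prove only the easy containment $(\Omega_1,\ldots,\Omega_n) \subseteq \Ker g$, pass to $\overline{\phi}: B \rightarrow \Lambda$, and recover the reverse containment from the dimension sandwich $\dimk B \le \dimk \Lambda \le \dimk B$ --- the first inequality from your spanning set of reduced products $p_0 c_1 p_1 \cdots c_m p_m$, the second from surjectivity. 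This buys a self-contained proof where the paper has a citation, at the (harmless) cost of using finite-dimensionality, which acyclicity of $\Gamma$ and finite-dimensionality of the $A_i$ guarantee. Two refinements you should make explicit. First, for the counting to work, the basis of each $A_i$ must be chosen compatibly with the Peirce decomposition $A_i = \bigoplus_{a,b} e_a A_i e_b$, so that each basis element has a well-defined source and target vertex; only then is the connecting arrow $c_l$ determined by the adjacent segments, giving the count $\sum_p \prod_l \dimk A_{i_l}$. Second, the worry in your final paragraph is unnecessary: you never need to verify that $(\Omega_1,\ldots,\Omega_n)$ identifies paths only within a single internal segment. The upper bound on $\dimk B$ uses nothing more than the containment of each ideal $(\Omega_i)$ of $k\Sigma_i$ inside the ideal $(\Omega_1,\ldots,\Omega_n)$ of $kQ$, and the linear independence of your spanning set is an automatic consequence of the sandwich, not an input to it. So the ``main obstacle'' you flag is already dissolved by the structure of your own argument.
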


\begin{obs}

In order to prove Theorem~\ref{th:gen_icnlp} below, we need to recall a few ideas from the proof of Theorem~\ref{th:icnlp}.

For every $i \in \Gamma_0$, let $E_i = \{e_i^1,\ldots,e_i^{c_i}\}$ be a complete set of pairwise orthogonal primitive idempotent elements of $A_i$. We remember from the proof of Theorem~\ref{th:gabriel_algebra} that we can assume that $(\Sigma_i)_0 = E_i$.

For every $i \in \Gamma_0$, $k\Sigma_i/\Omega_i \cong A_i$. So there is, for every $i$, a surjective algebra morphism $f_i: k\Sigma_i \rightarrow A_i$ such that $\Ker f_i = \Omega_i$.

With this notation, we have an algebra morphism $g: k\Gamma[\Sigma_1,\ldots,\Sigma_n] \rightarrow k(\Gamma,\Aa)$ that is uniquely determined by the following data:

\begin{itemize}
\item $g(\gamma) = f_i(\gamma)$ for every path $\gamma$ over $\Sigma_i$. (Observe that $\Sigma_i$ is a full subquiver of $\Gamma[\Sigma_1,\ldots,\Sigma_n]$).

\item $g(\alpha) = e_i^l \alpha e_j^m$ for every arrow $\alpha: e_i^l \rightarrow e_j^m$ such that $i \neq j$.
\end{itemize}

It is proved in \cite{ICNLP} that $g$ is a surjective algebra morphism and that $\Ker g = (\Omega_1,\ldots,\Omega_n)$. Clearly the statement of Theorem~\ref{th:icnlp} follows from this fact.

\end{obs}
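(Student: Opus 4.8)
The plan is to prove directly the three assertions about $g$ recalled above — that it is a well-defined algebra morphism, that it is surjective, and that $\Ker g = (\Omega_1,\ldots,\Omega_n)$ — since Theorem~\ref{th:icnlp} is then exactly the statement obtained by applying the first isomorphism theorem to $g$. For well-definedness I would invoke the universal property of the ordinary path algebra $k\Gamma[\Sigma_1,\ldots,\Sigma_n]$: an algebra morphism out of it is determined by sending each vertex to an idempotent, these forming a complete set of pairwise orthogonal idempotents, and each arrow $\alpha : u \to v$ to an element of $g(\epsilon_u)\,k(\Gamma,\Aa)\,g(\epsilon_v)$. A vertex $e_i^l$ of $\Sigma_i$ is sent to $f_i(e_i^l) = e_i^l$, viewed inside the block $A_i \subseteq k(\Gamma,\Aa)$; since the $1_{A_i}$ are orthogonal (they are the blocks) and the $e_i^l$ are pairwise orthogonal primitive idempotents within each $A_i$, the family $\{e_i^l\}_{i,l}$ is a complete set of pairwise orthogonal idempotents of $k(\Gamma,\Aa)$ (acyclicity of $\Gamma$ in fact keeps them primitive, as $e_i^l\,k(\Gamma,\Aa)\,e_i^l = e_i^l A_i e_i^l$). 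The sidedness condition is immediate: an internal arrow of $\Sigma_i$ goes to $f_i(\alpha) \in e_i^l(\rad A_i)e_i^m$, and a cross-arrow over an arrow $\beta : i \to j$ of $\Gamma$, with source $e_i^l$ and target $e_j^m$, goes to the length-one $\Aa$-path $e_i^l \beta e_j^m$, which satisfies $e_i^l(e_i^l \beta e_j^m)e_j^m = e_i^l \beta e_j^m$.

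For surjectivity I would exhibit a generating set of $k(\Gamma,\Aa)$ inside the image. Because each $f_i$ is surjective and the paths of $\Sigma_i$ generate $k\Sigma_i$, every element of $A_i$, regarded as a length-zero $\Aa$-path at block $i$, lies in the image. Moreover, summing the images of the cross-arrows over a fixed $\beta : i \to j$ gives $\sum_{l,m} e_i^l \beta e_j^m = (\sum_l e_i^l)\,\beta\,(\sum_m e_j^m) = 1_{A_i}\,\beta\,1_{A_j} = \beta$, so every arrow $\beta$ of $\Gamma$, as a length-one $\Aa$-path, is in the image as well. An arbitrary $\Aa$-path $a_1 \beta_1 a_2 \cdots \beta_n a_{n+1}$ is a product of such length-zero and length-one pieces, and $g$ is multiplicative, so $g$ is surjective.

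The inclusion $(\Omega_1,\ldots,\Omega_n) \subseteq \Ker g$ is easy: for $\omega \in \Omega_i$ one has $g(\omega) = f_i(\omega) = 0$ since $\Ker f_i = \Omega_i$, and as $\Ker g$ is an ideal it contains the ideal these generate. The reverse inclusion is the crux, and this is where acyclicity is essential. I would pass to the induced surjection $\overline g : k\Gamma[\Sigma_1,\ldots,\Sigma_n]/(\Omega_1,\ldots,\Omega_n) \twoheadrightarrow k(\Gamma,\Aa)$ and show it is an isomorphism by a dimension count, which forces $\Ker g = (\Omega_1,\ldots,\Omega_n)$. On the target side, fixing an ordinary path $p = \beta_1 \cdots \beta_t$ of $\Gamma$ (of which there are finitely many, by acyclicity) with visited vertices $v_0,\ldots,v_t$, the defining relations $M$ make the $\Aa$-paths supported on $p$ multilinear in their algebra entries, so they span a space isomorphic to $A_{v_0} \otimes_k \cdots \otimes_k A_{v_t}$; hence $\dimk k(\Gamma,\Aa) = \sum_p \prod_{s=0}^t \dimk A_{v_s}$. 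On the source side, every path of $\Gamma[\Sigma_1,\ldots,\Sigma_n]$ factors uniquely as an alternation $w_0\,\alpha_1\,w_1 \cdots \alpha_t\,w_t$ of cross-arrows $\alpha_s$ (tracing out a path $\beta_1\cdots\beta_t$ of $\Gamma$) and internal $\Sigma$-runs $w_s$ lying in a single block $\Sigma_{v_s}$.

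The main obstacle is the bookkeeping in this last step. One must check that the ideal $(\Omega_1,\ldots,\Omega_n)$ is homogeneous for the grading by block-sequence: a generator $x\,\omega\,y$ with $\omega \in \Omega_i$ has $\omega$ supported inside block $i$ and introduces no cross-arrows, so it lies in the span of paths with the single block-sequence determined by concatenating $x$ and $y$ through block $i$. Consequently the quotient splits as $\bigoplus_p \bigotimes_{s} (k\Sigma_{v_s}/\Omega_{v_s}) \cong \bigoplus_p \bigotimes_s A_{v_s}$, each internal run $w_s$ contributing exactly a basis of $A_{v_s}$ while the cross-arrows carry no relations; its dimension therefore matches $\dimk k(\Gamma,\Aa)$ term by term. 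Acyclicity is used both to guarantee finite-dimensionality (so that a dimension-preserving surjection is injective) and to make the block-sequence decomposition finite and well-defined. Once the two dimensions agree, $\overline g$ is a surjection of finite-dimensional spaces of equal dimension, hence an isomorphism, whence $\Ker g = (\Omega_1,\ldots,\Omega_n)$ and Theorem~\ref{th:icnlp} follows.
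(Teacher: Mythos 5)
Your proposal is correct, but it necessarily takes a different route from the paper, because the paper gives no proof here at all: the remark only recalls how $g$ is defined and then cites \cite{ICNLP} for the two substantive claims (that $g$ is surjective and that $\Ker g = (\Omega_1,\ldots,\Omega_n)$). You supply a self-contained proof of exactly the delegated part. Your step (i) is the right formalization of ``uniquely determined by the following data'': the $e_i^l$, seen as length-zero $\Aa$-paths, are pairwise orthogonal idempotents (orthogonality across blocks holds since a product of length-zero paths at different vertices is zero) summing to $\sum_i 1_{A_i}=1$, and every arrow lands in the correct corner, so the universal property of the path algebra applies. Step (ii) is fine, since the image contains each $A_i=f_i(k\Sigma_i)$ and each arrow $\beta=\sum_{l,m}e_i^l\beta e_j^m$ of $\Gamma$, and these generate. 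In step (iii) the bookkeeping you flag does go through: a path of $\Gamma[\Sigma_1,\ldots,\Sigma_n]$ tracing the $\Gamma$-path $\beta_1\cdots\beta_t$ through blocks $v_0,\ldots,v_t$ corresponds bijectively to a tuple $(w_0,\ldots,w_t)$ of paths $w_s$ in $\Sigma_{v_s}$ (the cross-arrows are then forced by the endpoints of the $w_s$), so the graded piece of $kQ$ is $k\Sigma_{v_0}\otimes\cdots\otimes k\Sigma_{v_t}$, the ideal $(\Omega_1,\ldots,\Omega_n)$ meets it in $\sum_s k\Sigma_{v_0}\otimes\cdots\otimes(\Omega_{v_s})\otimes\cdots\otimes k\Sigma_{v_t}$, and the quotient piece is $A_{v_0}\otimes\cdots\otimes A_{v_t}$, matching the corresponding summand of $k(\Gamma,\Aa)$ term by term; acyclicity of $\Gamma$ and the standing finite-dimensionality of the $A_i$ make both sums finite, so the induced surjection is an isomorphism and the kernel is as claimed. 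Compared with the citation, your argument costs some linear algebra but buys self-containedness, makes explicit where acyclicity and finite dimension enter, and produces the block-graded decompositions of both algebras, which are implicitly reused later in the paper (e.g., in the proof of Lemma~\ref{lem:gen_icnlp} and in Subsection~\ref{subsubsec:simpl_from_rel}).
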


Next, we are interested in proving Theorem~\ref{th:gen_icnlp} below, extending Theorem~\ref{th:icnlp} above to the context of the gbp-algebras defined in Subsection~\ref{subsec:gbpa}.

First we need to introduce some notations. Let $\Gamma$ be an acyclic quiver. In order to make the notation clearer, we may assume (without loss of generality) that $\Gamma_0=\{1,\ldots,n\}$ and that $\Gamma_1 = \{\alpha_1,\ldots,\alpha_m\}$. Let $\mathcal{A} = \{A_1,\ldots,A_n\}$, where each $A_i = k\Sigma_i/\Omega_i$ is a bound path algebra, where $\Sigma_i$ is a quiver and $\Omega_i$ is an admissible ideal of $k\Sigma_i$. Then $f_i: k\Sigma_i \rightarrow \frac{k\Sigma_i}{\Omega_i}$ will be the canonical projection.

Due to Theorem~\ref{th:icnlp}, there is a surjective algebra morphism $$g:kQ \rightarrow k(\Gamma,\mathcal{A})$$ where $Q = \Gamma[\Sigma_1,\ldots,\Sigma_n]$ is the quiver obtained from $\Gamma, \Sigma_1, \ldots, \Sigma_n$ as explained above. Also, $\Ker g = \Omega \doteq (\Omega_1,\ldots,\Omega_n)$. Now we denote by $c_i$ the number of vertices of $\Sigma_i$, and $c_{ij} = c_i.c_j$.

Next label the set of vertices of the $\Sigma_i$: $(\Sigma_i)_0 = \{e^1_i,\ldots,e^{c_i}_i\}$.
Now, by Theorem~\ref{th:icnlp}, if $\alpha_l:i \rightarrow j$ is an arrow of $\Gamma$, then there are $c_{ij}$ corresponding arrows in $Q$, which we shall denote by $\alpha_{l,e^p_i,e^q_j}:e^p_i \rightarrow e^q_j$, with $1 \leq p \leq c_i, 1 \leq q \leq c_j$. With this notation,

$$Q_1 = (\Sigma_1)_1 \cup \ldots \cup(\Sigma_n)_1 \cup \{\alpha_{l,x,y}:1 \leq l \leq m, x \in \Sigma_{s(\alpha_l)}, y \in \Sigma_{e(\alpha_l)}\}$$

Now we turn our attention to the relations. Suppose that the quiver $\Gamma$ has a given set $I$ of relations generating an admissible ideal in $k\Gamma$. Then, following Definition~\ref{def:relation}, we are going to consider the quotient of $k(\Gamma,\mathcal{A})$ by the ideal generated by $\Aa(I)$ below:

  \begin{align*}
  \Aa(I) \doteq &\left\{\sum_{1 \leq i \leq s} \lambda_i \beta_{i1} \overline{\gamma_{i1}} \beta_{i2} \ldots \overline{\gamma_{i(r-1)}} \beta_{ir}: \right.\\
  &\left. \gamma_{ij} \text{ is a path in } \Sigma_{e(\beta_{ij})}, \text{ and } \displaystyle\sum_{1 \leq i \leq s} \lambda_i \beta_{i1} \ldots \beta_{ir} \text{ is a relation in }I \right\}
  \end{align*}
  
  Define, in $kQ$,
  
  \begin{align}
  R(I) \doteq &\left\{\sum_{1 \leq i \leq s} \lambda_i (\beta_{i1,e_l^p,s(\gamma_{i1})}) \gamma_{i1} (\beta_{i2,e(\gamma_{i1}),s(\gamma_{i2})}) \gamma_{i2} \ldots \gamma_{i(r-1)} (\beta_{ir,e(\gamma_{i(r-1)}),e_{l'}^q} ): \right. \nonumber\\
  &\forall i, \lambda_i \in k, \gamma_{ij} \text{ is a path in } \Sigma_{e(\beta_{ij})} \text{ for } j\geq 1, \text{and } \displaystyle\sum_{1 \leq i \leq s} \lambda_i \beta_{i1} \ldots \beta_{ir} \nonumber\\
 &\left. \text{ is a relation in }I \text{ between vertices } l\text{ and } l',1\leq p \leq c_l, 1 \leq q \leq c_{l'}\right\} \label{eq:R_gen_icnlp}
  \end{align}
  
  And let $L(I)$ be the ideal generated by $R(I)$. Note that $L(I)$ is an ideal of  $kQ = k\Gamma[\Sigma_1,\ldots,\Sigma_n]$.
  
  \begin{defi}
  \label{def:induced_relations}
  
  The set $R(I)$ is the \textbf{set of relations in $k\Gamma[\Sigma_1,\ldots,\Sigma_n]$ induced} by the set of relations $I$ of $\Gamma$, and $L(I) \doteq (R(I))$ is the \textbf{ideal in $k\Gamma[\Sigma_1,\ldots,\Sigma_n]$ induced} by $I$.
  \end{defi}
  
\begin{lem}\label{lem:gen_icnlp}
Keep the notation from above. We have that $g(L(I)) = (\Aa(I))$.
\end{lem}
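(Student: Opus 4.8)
We must show that applying the morphism $g: kQ \rightarrow k(\Gamma,\Aa)$ to the induced ideal $L(I) = (R(I))$ produces precisely the ideal $(\Aa(I))$. Since $g$ is an algebra morphism and $L(I)$ is generated by $R(I)$, the first observation is that it suffices to control generators: because $g$ is surjective, $g$ sends the two-sided ideal generated by $R(I)$ onto the two-sided ideal generated by $g(R(I))$. Thus the whole statement reduces to proving the elementwise identity on generators, namely that $g(R(I)) = \Aa(I)$ as subsets of $k(\Gamma,\Aa)$; the equality of the generated ideals then follows formally.

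\medskip

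\emph{Computing $g$ on a single generator of $R(I)$.} The plan is to take an arbitrary element
$$\rho = \sum_{1 \leq i \leq s} \lambda_i (\beta_{i1,e_l^p,s(\gamma_{i1})}) \gamma_{i1} (\beta_{i2,e(\gamma_{i1}),s(\gamma_{i2})}) \gamma_{i2} \ldots \gamma_{i(r-1)} (\beta_{ir,e(\gamma_{i(r-1)}),e_{l'}^q})$$
of $R(I)$ and evaluate $g$ term by term, using its two defining rules together with multiplicativity. For each factor that is an arrow $\alpha_{l,x,y}$ coming from an arrow of $\Gamma$ (the $\beta$-type factors), we have $g(\alpha_{l,x,y}) = e_i^{?}\,\alpha_l\, e_j^{?}$, while for each factor $\gamma_{ij}$, which is a path inside some $\Sigma_{e(\beta_{ij})}$, we have $g(\gamma_{ij}) = f_{e(\beta_{ij})}(\gamma_{ij}) = \overline{\gamma_{ij}}$. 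I would then multiply these images in $k(\Gamma,\Aa)$ using the multiplication rule of the generalized path algebra, which inserts the $\Aa$-elements $\overline{\gamma_{ij}}$ between the successive arrows $\beta_{ij}$ exactly at the vertices where the composition is nonzero. The key bookkeeping point is that the subscripts on the arrows in $\rho$ are chosen precisely so that the starts and ends match up, so no term collapses to zero and the product telescopes into $\lambda_i\,\beta_{i1}\overline{\gamma_{i1}}\beta_{i2}\ldots\overline{\gamma_{i(r-1)}}\beta_{ir}$, which is exactly the $i$-th summand in the corresponding element of $\Aa(I)$.

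\medskip

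\emph{The two inclusions.} Having established $g(\rho) = \sum_i \lambda_i \beta_{i1}\overline{\gamma_{i1}}\ldots\beta_{ir}$, the inclusion $g(R(I)) \subseteq \Aa(I)$ is immediate once we note that $\sum_i \lambda_i \beta_{i1}\ldots\beta_{ir}$ is a relation in $I$ (this is built into the definition of $R(I)$) and each $\gamma_{ij}$ is a path in the correct $\Sigma_{e(\beta_{ij})}$, so the image matches the defining form of $\Aa(I)$. For the reverse inclusion $\Aa(I) \subseteq g(R(I))$, I would start from an arbitrary element of $\Aa(I)$, written with its data $(\lambda_i, \beta_{ij}, \gamma_{ij})$, and build the corresponding preimage in $R(I)$ by decorating each arrow $\beta_{ij}$ with the appropriate vertex subscripts dictated by where the neighboring paths $\gamma_{ij}$ start and end; the computation of the previous paragraph shows this preimage maps onto the chosen element. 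Passing from this set-level equality $g(R(I)) = \Aa(I)$ to the ideal-level equality $g(L(I)) = (\Aa(I))$ finishes the proof.

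\medskip

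\emph{Main obstacle.} The routine part is the algebra-morphism and surjectivity reduction; the delicate part is the explicit telescoping computation of $g(\rho)$, where one must verify that the intermediate idempotents $e_i^p$ produced by the rule $g(\alpha_{l,x,y}) = e_i^l\,\alpha\,e_j^m$ are absorbed correctly by the adjacent factors $\overline{\gamma_{ij}} = f(\gamma_{ij})$, so that they neither kill the product nor leave spurious idempotent factors. Keeping the indices consistent through this multiplication — matching the vertex labels $s(\gamma_{ij})$, $e(\gamma_{ij})$ on the $\beta$-subscripts against the actual sources and targets inside each $\Sigma$ — is where the care is genuinely needed.
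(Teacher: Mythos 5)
There is a genuine gap, and it sits exactly at the point you flagged as the ``main obstacle'': your telescoping computation of $g(\rho)$ is wrong at the two endpoints. For a generator $\rho \in R(I)$ decorated with the vertices $e_l^p$ and $e_{l'}^q$, the rule $g(\beta_{i1,e_l^p,s(\gamma_{i1})}) = e_l^p\,\beta_{i1}\,e_{e(\beta_{i1})}^{s(\gamma_{i1})}$ produces idempotents at both ends of each factor. The \emph{interior} idempotents are indeed absorbed by the adjacent elements $\overline{\gamma_{ij}}$ (since $\gamma_{ij}$ starts at $s(\gamma_{ij})$ and ends at $e(\gamma_{ij})$), but the two \emph{outermost} ones are not absorbed by anything. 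The correct result is
$$g(\rho) = e_l^p\left(\sum_{i}\lambda_i\,\beta_{i1}\overline{\gamma_{i1}}\beta_{i2}\ldots\overline{\gamma_{i(r-1)}}\beta_{ir}\right)e_{l'}^q,$$
whereas the corresponding element of $\Aa(I)$ carries the identity coefficients $1_{A_l}$ and $1_{A_{l'}}$ at its ends. Consequently the set equality $g(R(I)) = \Aa(I)$, on which your whole argument rests, is false, and both of your claimed set-level inclusions fail: $g(\rho)$ is in general not an element of the set $\Aa(I)$, and a single element of $\Aa(I)$ is not the image of any single generator of $R(I)$ --- each choice of decoration $(p,q)$ only produces the ``$(p,q)$-corner'' $e_l^p\,x\,e_{l'}^q$ of the element $x$ you are trying to hit.

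The conclusion can be rescued, but only at the ideal level, which is how the paper argues. One direction: since $g(\rho) = e_l^p\,x\,e_{l'}^q$ with $x \in \Aa(I)$, the image $g(R(I))$ lies in the ideal $(\Aa(I))$, and hence so does $g(L(I))$, your reduction-to-generators step being sound. The other direction needs the summation trick you are missing: because $\sum_p e_l^p = 1_{A_l}$ and $\sum_q e_{l'}^q = 1_{A_{l'}}$, each $x \in \Aa(I)$ satisfies $x = \sum_{p,q} e_l^p\,x\,e_{l'}^q = \sum_{p,q} g(\rho_{p,q})$, where $\rho_{p,q}$ is the generator of $R(I)$ with the same data but decorated by $e_l^p$ and $e_{l'}^q$; thus $\Aa(I)$ is contained in the ideal generated by $g(R(I))$. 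Finally, since $g$ is surjective, $g(L(I))$ is itself an ideal (Correspondence Theorem), which yields $(\Aa(I)) \subseteq g(L(I))$. So your skeleton (reduce to generators, telescope) is the right one, but the endpoint bookkeeping forces the two inclusions to be run through ideals rather than through an elementwise identification of generators.
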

\begin{proof}

Remember how the surjection $g$ was defined in the proof of Theorem~\ref{th:icnlp}. We have that

\begin{align*}
&g\left(\sum_{1 \leq i \leq s} \lambda_i (\beta_{i1,e_l^p,s(\gamma_{i1})}) \gamma_{i1} (\beta_{i2,e(\gamma_{i1}),s(\gamma_{i2})}) \gamma_{i2} \ldots \gamma_{i(r-1)} (\beta_{ir,e(\gamma_{i(r-1)}),e_{l'}^q} )\right) \\
&=\sum_{1 \leq i \leq s} \lambda_i g(\beta_{i1,e_l^p,s(\gamma_{i1})}) f_{e(\beta_{i1})}(\gamma_{i1}) g(\beta_{i2,e(\gamma_{i1}),s(\gamma_{i2})}) f_{e(\beta_{i2})}(\gamma_{i2}) \ldots \\
& \hspace{4cm} f_{e(\beta_{i(r-1)})}(\gamma_{i(r-1)}) g(\beta_{ir,e(\gamma_{i(r-1)}),e_{l'}^q} ) \\ 
&=\sum_{1 \leq i \leq s} \lambda_i e_l^p \beta_{i1} e_{e(\beta_{i1})}^{s(\gamma_{i1})}\overline{\gamma_{i1}} e_{s(\beta_{i2})}^{e(\gamma_{i1})} \beta_{i2} e_{e(\beta_{i2})}^{s(\gamma_{i2})} \overline{\gamma_{i2}} \ldots \overline{\gamma_{i(r-1)}} e_{s(\beta_{ir})}^{e(\gamma_{i(r-1)})} \beta_{ir} e_{l'}^q\\ 
&=\sum_{1 \leq i \leq s} \lambda_i e_l^p\beta_{i1} \overline{\gamma_{i1}} \beta_{i2} \overline{\gamma_{i2}} \ldots \overline{\gamma_{i(r-1)}} \beta_{ir} e_{l'}^q \\
&=e_l^p \left( \sum_{1 \leq i \leq s} \lambda_i \beta_{i1} \overline{\gamma_{i1}} \beta_{i2} \overline{\gamma_{i2}} \ldots \overline{\gamma_{i(r-1)}} \beta_{ir} \right) e_{l'}^q
\end{align*}

The first conclusion is that $g(R(I)) \subseteq (\Aa(I))$. Since $g$ is an algebra morphism and $L(I)$ is the ideal generated by $R(I)$, this implies that $g(L(I)) \subseteq (\Aa(I))$. 

For the converse, remember that $\sum_p e_l^p = 1_{k\Sigma_l}$ and that $\sum_q e_{l'}^q = 1_{k\Sigma_{l'}}$. With this, the same calculations above show that $\Aa(I)$ is contained in the ideal generated by $g(R(I))$, and thus also contained in the ideal generated by $g(L(I))$, since $R(I) \subseteq L(I)$. But $g$ is surjective, so by the Correspondence Theorem $g(L(I))$ is already an ideal, which implies that $(\Aa(I)) \subseteq g(L(I))$.
\end{proof}

\begin{teo}\label{th:gen_icnlp}
Let $k(\Gamma,\Aa,I)$ be a gbp-algebra, with $\Gamma_0 =\{1,\ldots,n\}$ and with $\Aa = \{k\Sigma_1/\Omega_1,\ldots,k\Sigma_n/\Omega_n\}$ being a collection of bound path algebras. Then we have that: 

\begin{enumerate}

\item [(1)] $(\Omega_1,\ldots,\Omega_n)+L(I)$ is an admissible ideal of $k\Gamma[\Sigma_1,\ldots,\Sigma_n]$, and

\item [(2)] The following isomorphism holds:

$$\frac{k\Gamma[\Sigma_1,\ldots,\Sigma_n]}{(\Omega_1,\ldots,\Omega_n)+L(I)} \cong \frac{k(\Gamma,\mathcal{A})}{(\Aa(I))} \doteq k(\Gamma,\Aa,I)$$

\end{enumerate} 

\end{teo}

\begin{proof}

We already know that $\Omega \doteq (\Omega_1,\ldots,\Omega_n)$ is admissible from Theorem~\ref{th:icnlp}. Also $L(I)$ is admissible because it is generated by the set $R(I)$ in Equation~\ref{eq:R_gen_icnlp} above, and $R(I)$ is a set of relations which are sums of paths having length at least two, $I$ is admissible and $\Gamma$ is assumed acyclic and finite. It follows that $\Omega + L(I)$ is admissible.

Denote $J = (\Aa(I))$. Let $$\pi : k(\Gamma,\mathcal{A}) \rightarrow \frac{k(\Gamma,\mathcal{A})}{J}
$$ be the canonical projection. Define $$\tilde{\phi} \doteq \pi \circ g: k\Gamma[\Sigma_1,\ldots,\Sigma_n] \xrightarrow{g} k(\Gamma,\mathcal{A}) \xrightarrow{\pi} \frac{k(\Gamma,\mathcal{A})}{J}$$

Since $\tilde{\phi}$ is surjective, by the First Isomorphism Theorem it induces an isomorphism

$$\phi: \frac{k\Gamma[\Sigma_1,\ldots,\Sigma_n]}{\Ker \tilde{\phi}} \rightarrow \frac{k(\Gamma,\mathcal{A})}{J}$$

We claim that $\Ker \tilde{\phi} = g^{-1}(J)$. Indeed,

$$x \in \Ker \tilde{\phi} \Leftrightarrow \tilde{\phi}(x) = 0 \Leftrightarrow \pi \circ g(x) = 0 \Leftrightarrow g(x) \in J \Leftrightarrow x \in g^{-1}(J)$$

So it remains to prove that $g^{-1}(J) = \Omega + L(I)$.

$(\supseteq)$ Since $\Omega = \Ker g$, $g(\Omega) = 0 \subseteq J \Rightarrow \Omega \subseteq g^{-1}(J)$. From Lemma~\ref{lem:gen_icnlp}, $g(L(I)) = (\Aa(I)) \doteq J$, thus $L(I) \subseteq g^{-1}(J)$. Therefore $\Omega + L(I) \subseteq g^{-1}(J)$, because $g^{-1}(J) = \Ker \tilde{\phi}$ is an ideal and thus closed under sums.

$(\subseteq)$ Let $x \in g^{-1}(J)$ . Then $g(x) \in J$ and, by Lemma~\ref{lem:gen_icnlp}, there is an $l \in L(I)$ such that $g(x) = g(l)$. Then $x-l \in \Ker g = \Omega$, so there is $\omega \in \Omega$ such that $x-l = \omega$. Therefore $x = \omega + l$ with $\omega \in \Omega$ and $l \in L(I)$. Thus $x \in \Omega+L(I)$.

\end{proof}
  
\begin{ex}
\label{ex:gen_icnlp}
Let $\Lambda$ be the gbp-algebra given by the quiver

\begin{displaymath}
\xymatrix{
{\begin{matrix}
 k \\ \bullet \\ 1
 \end{matrix}}
\ar[rr]^{\alpha}="a" && 
{\begin{matrix}
 \frac{k\Sigma_2}{\Omega_2} \\ \bullet \\ 2
 \end{matrix}}
\ar@<0.5ex>[rr]^{\beta}="b" \ar@<-0.5ex>[rr]_{\gamma} &&
{\begin{matrix}
 k \\ \bullet \\ 3
 \end{matrix}}
\ar @{.} @/^2pc/ "a";"b"}
\end{displaymath}

with a relation $\alpha \beta = 0$, where $\Sigma_2$ is the quiver

\begin{displaymath}
\xymatrix{
\bullet \ar[d]^{\delta}\\
\bullet \ar[d]^{\varepsilon}\\ 
\bullet}
\end{displaymath}

and $\Omega_2 = (\delta \varepsilon)$. Applying Theorem~\ref{th:gen_icnlp}, we conclude that the Gabriel quiver $Q$ of $\Lambda$ is given by

\begin{displaymath}
\xymatrix{
&&& \bullet_{21} \ar[dd]^{\delta} \ar@<0.5ex>[ddrrr]^{\beta_1} \ar@<-0.5ex>[ddrrr]_{\gamma_1}&&& \\
&&&&&&\\
\bullet_1 \ar[uurrr]^{\alpha_1} \ar[rrr]^{\alpha_2} \ar[ddrrr]_{\alpha_3} &&& \bullet_{22} \ar[dd]^{\varepsilon} \ar@<0.5ex>[rrr]^{\beta_2} \ar@<-0.5ex>[rrr]_{\gamma_2} &&& \bullet_3 \\
&&&&&&\\
&&& \bullet_{23} \ar@<0.5ex>[uurrr]^{\beta_3} \ar@<-0.5ex>[uurrr]_{\gamma_3} &&& }
\end{displaymath}

and that $\Lambda \cong kQ /( \Omega + L)$, where $\Omega = (\delta \varepsilon)$ and $$L = (\alpha_1 \beta_1, \alpha_1 \delta \beta_2, \alpha_1 \delta \varepsilon \beta_3, \alpha_2 \beta_2, \alpha_2 \epsilon \beta_3, \alpha_3 \beta_3)$$
\end{ex}

\section{Consequences and examples}
\label{subsec:conseq_icnlp}

In this section we are interested in getting consequences of Theorem~\ref{th:gen_icnlp}. This theorem showed how to obtain a \textbf{bound path algebra} isomorphic to a given \textbf{gbp-algebra}. What we want to analyse now is the inverse process:  how to obtain a gbp-algebra isomorphic to a given bound path algebra.

\begin{obs}
\label{obs:trivial_simplifications}
As already mentioned, there are always two trivial ways of realizing a bound path algebra as a gbp-algebra. Let $A$ be an algebra. Then:

\begin{itemize}

\item If $\Gamma= \bullet_1$, i.e., $\Gamma$ is a quiver with only one vertex and no arrows, and if $\Aa= \{A\}$, then obviously $A \cong k(\Gamma,\Aa)$.

\item By Theorem~\ref{th:gabriel_algebra}, there is a quiver $Q$ and an admissible ideal $I$ on $Q$ such that $A = kQ/I$. Let $\Aa = \{k:i \in Q_0\}$. Then clearly $A \cong k(Q,\Aa,I)$.

\end{itemize}

\end{obs}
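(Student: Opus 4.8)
The plan is to treat each bullet separately, in both cases by directly unwinding the defining data of the gbp-algebra and observing that it degenerates onto $A$ itself. For the first bullet, I would start from the definition of $k(\Gamma,\Aa)$ with $\Gamma = \bullet_1$ and $\Aa = \{A\}$. Since $\Gamma$ has no arrows, there are no ordinary paths of positive length over $\Gamma$, so the only $\Aa$-paths are those of length $0$, namely the elements of $A_1 = A$. Hence $k[\Gamma,\Aa]$ is spanned by the symbols $\{a : a \in A\}$, and the subspace $M$, restricted to length-$0$ paths, is generated exactly by the elements $(a+a')-a-a'$ and $(\lambda a)-\lambda \cdot a$. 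Passing to the quotient by $M$ therefore imposes precisely the $k$-vector space structure of $A$, so that $a \mapsto \overline{a}$ is a linear isomorphism $A \to k(\Gamma,\Aa)$. Applying the multiplication rule to two length-$0$ paths $a,b$ at the unique vertex returns the product $ab$ computed in $A$, so this linear isomorphism is in fact an algebra isomorphism, giving $A \cong k(\Gamma,\Aa)$.

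For the second bullet, I would first invoke Theorem~\ref{th:gabriel_algebra} to write $A = kQ/I$ with $Q = Q_A$ and $I$ admissible, and then set $\Aa = \{k : i \in Q_0\}$. By the remark following the definition of generalized path algebra, taking $A_i = k$ at every vertex yields $k(Q,\Aa) = kQ$, the ordinary path algebra. The point to verify is that $k(Q,\Aa,I) = k(Q,\Aa)/(\Aa(I))$ recovers $A$, i.e. that $(\Aa(I)) = (I)$. To apply Definition~\ref{def:relation}, I would present each vertex algebra $k$ as the bound path algebra $k\Sigma_i/\Omega_i$ with $\Sigma_i$ a single vertex, no arrows, and $\Omega_i = 0$; then the only path in any $\Sigma_i$ is the length-$0$ path, whose image $\overline{\gamma_{ij}}$ equals $1 \in k$. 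Substituting these into the defining expression for $\Aa(I)$ collapses each element to $\sum_i \lambda_i \beta_{i1}\beta_{i2}\cdots\beta_{im_i}$, which is precisely the corresponding relation of $I$. Thus $\Aa(I) = I$ as subsets of $kQ = k(Q,\Aa)$, whence $(\Aa(I)) = (I)$ and $k(Q,\Aa,I) = kQ/(I) = A$.

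The arguments are essentially bookkeeping, and I do not expect a genuine obstacle. The one point demanding care is the interaction with the quotient by $M$ in the first bullet: one must confirm that $M$ contributes no identifications among length-$0$ paths beyond the additivity and scalar relations, so that the length-$0$ component is $A$ on the nose rather than a proper quotient of it. In the second bullet, the only subtlety is recording that the trivial presentation $k = k\Sigma_i/\Omega_i$ forces $\overline{\gamma_{ij}} = 1$, so that the inserted path factors disappear and $\Aa(I)$ reduces to $I$ verbatim.
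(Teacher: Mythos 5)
Your proposal is correct: the paper offers no proof of this remark beyond the words ``obviously'' and ``clearly'', and your argument is precisely the definitional unwinding the authors intend --- checking that with a single vertex and no arrows the quotient by $M$ leaves exactly $A$ with its own multiplication, and that with $A_i = k$ (presented trivially as $k\Sigma_i/\Omega_i$ with $\Sigma_i$ a point and $\Omega_i = 0$) the set $\Aa(I)$ collapses to $I$ inside $k(Q,\Aa) = kQ$. Both verifications, including the two points of care you flag, are accurate and complete.
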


\begin{defi}
\label{def:simplification}
\index{simplification}
\index{simplification!without relations}
\index{simplification!without cycles}
\index{simplification!trivial}
Let $A$ be an algebra. We say that a gbp-algebra $k(\Gamma,\Aa,I)$ is a \textbf{simplification} of $A$ if $A \cong k(\Gamma,\Aa,I)$. The two simplifications given above are called \textbf{trivial simplifications}. If $I = 0$, we say that the simplification is \textbf{without relations}. If $\Gamma$ is acyclic, we say that the simplification is \textbf{without cycles}.
\end{defi}

\begin{defi}
Let $k(\Gamma,\Aa,I)$ and $k(\Delta,\mathcal{B},J)$ be two simplifications of $A$, with $\Aa = \{A_i : i \in \Gamma_0\}$ and $\mathcal{B} = \{B_i : i \in \Delta_0\}$. We say that they are \textbf{equivalent}\index{simplification!equivalent} if there is an isomorphism of quivers $\phi:\Gamma \rightarrow \Delta$ such that $A_i \cong B_{\phi(i)}$ for every $i \in \Gamma_0$ and also if there is an isomorphism of algebras $k\Gamma/I \cong k\Delta/J$.
\end{defi}

\begin{defi}
\label{def:simplifiable}
\index{simplifiable}
\index{simplifiable!without relations}
\index{simplifiable!without cycles}
We say that an algebra $A$ is \textbf{simplifiable} if it admits a simplification which is not equivalent to the trivial ones listed in Remark~\ref{obs:trivial_simplifications}. We also use the terms \textbf{simplifiable without relations} or \textbf{without cycles} in the case where the non-trivial simplification is respectively without relations or without cycles.
\end{defi}

An example of a simplifiable algebra was given in Example~\ref{ex:gen_icnlp}, with the gbp-algebra form of the algebra $\Lambda$ being the (non-trivial) simplification. Also, in this particular case, the simplification is without cycles.

For the rest of this section, we establish some criteria to decide on whether or not a given algebra is simplifiable. Since Theorems~\ref{th:icnlp} and~\ref{th:gen_icnlp} only deal with the case when $\Gamma$ is acyclic, we shall only analyse simplifications without cycles.

\subsection{\textit{Equivalence relations over vertices}}

First we need to establish some useful terminology.

\begin{defi}
\label{def:rel_equiv}
Let $Q$ be a quiver and let $\sim \in Q_0 \times Q_0$ be an equivalence relation over the vertices of $Q$. 

\begin{itemize}

\item The \textbf{reduced quiver}\index{quiver!reduced} of $Q$, denoted by $Q^{\sim}$, is defined as the quiver obtained from $Q$ by deleting all the arrows whose start and end vertices are identified by $\sim$. Namely, $Q^{\sim}_0 = Q_0$ and $Q^{\sim}_1 = \{\alpha \in Q_1: s(\alpha) \nsim e(\alpha)\}$.

\item We say that $\sim$ is \textbf{coherent with the arrows}\index{coherent with arrows} of $Q$ if:

\begin{enumerate}

\item [(1)]For every arrow $i \rightarrow j$ contained in an oriented cycle of $Q$, we have that $i \sim j$.

\item [(2)]If $i \sim j$ and $j \nsim k$ then $[i,k] = [j,k]$ and $[k,i] = [k,j]$.

\end{enumerate}

\item Suppose that $\sim$ is coherent with the arrows. Then a \textbf{labelling}\index{labelling} of $Q$ will be a quiver morphism $z: Q^{\sim} \rightarrow \frac{Q^{\sim}}{\sim}$ such that $z(x) = \overline{x}$ for every vertex $x \in Q_0$, and such that, for every pair of vertices $x,y \in Q_0$, the restriction $z|_{Q^{\sim}(x,y)}: Q^{\sim}(x,y) \rightarrow \frac{Q^{\sim}}{\sim} (x,y)$ is bijective. (Since $\sim$ is coherent, such a $z$ will always exist). For the rest of this definition, we assume that $Q$ has a labelling denoted by $z$.

\item Let $\gamma$ be an ordinary path in $Q$. Note that we can write $\gamma = \delta_0 \alpha_1 \delta_1 \ldots \alpha_m \delta_m$, where, for each $i$, $\delta_i$ is a path passing only through vertices in the same equivalence class, and $\alpha_i: j_1 \rightarrow j_2$ is an arrow with $j_1 \nsim j_2$. Then the \textbf{path induced by $\gamma$ over the quotient quiver}\index{path!induced in the quotient} is defined to be the path $z(\gamma) = z(\alpha_1) z(\alpha_2)\ldots z(\alpha_m)$. If $\delta_0$ and $\delta_m$ have both length zero, we also say that $\gamma$ is a \textbf{straightforward} path.

\item Let $\gamma = \sum_{t=1}^r \lambda_t \gamma_t$ be a relation in $Q$, with $\lambda_t \in k$ and $\gamma_t$ a path in $Q$ for all $t$, with the $\gamma_t$ pairwise distinct. 

\begin{enumerate}

\item [(1)]We say that $\gamma$ is an \textbf{internal relation} \index{relation!internal} if $l(z(\gamma_t)) = 0$ for all $t$.

\item [(2)]If $\gamma_t$ is straightforward for every $t$ and if $z(\gamma_t) \neq z(\gamma_s)$ for every $1 \leq t,s \leq r$ such that $t$ and $s$ are distinct, then we say that $\gamma$ is an \textbf{external relation}. (Note that an external relation cannot be internal).

\end{enumerate}

\item Let $R$ be a finite set of relations in $Q$. We say $R$ and $\sim$ are \textbf{compatible}\index{relation!compatible}\index{compatible} if:

\begin{enumerate}

\item [(1)]Every relation in $R$ is either internal or external relative to $\sim$.

\item [(2)]If $\overline{x} \subseteq Q_0$ is an equivalence class relatively to $\sim$ and $\Sigma_{\overline{x}}$ denotes the full subquiver of $Q_0$ determined by $\overline{x}$, then the relations in $R$ involving only vertices of $\overline{x}$ generate an admissible ideal in $k \Sigma_{\overline{x}}$.

\item [(3)]If $\gamma = \sum_{t=1}^r \lambda_t \gamma_t \in R$ is external, where $\lambda_t \in k$ and $\gamma_t$ is a path in $Q$ for all $t$, then for every family of straightforward paths $\{\eta_t:1 \leq t \leq r\}$ in $Q$ such that for each $t$, $z(\gamma_t)=z(\eta_t)$, we have that $\sum_{t=1}^r \lambda_t \eta_t \in R$.

\end{enumerate}

\end{itemize}

\end{defi}

\begin{ex}
\label{ex:rel_equiv}

Let $A$ be the the bound path algebra given by the quiver $Q$ below:

\begin{displaymath}
\xymatrix{
 & & 2 \ar[dll]_{\beta} & & \\
1 & & & & 4 \ar[ull]_{\alpha} \ar[dll]^{\gamma} \\
 & & 3 \ar[ull]^{\delta} & &
}
\end{displaymath}

bound by a set of relations $R$.
Let $\sim$ be the smallest equivalence relation over $Q_0$ such that $2 \sim 3$. Then $\sim$ is coherent with the arrows of $Q$.  The quotient quiver is given by 

\begin{displaymath}
\xymatrix{\overline{1} && \overline{2} \ar[ll]_{\eta} &&\overline{4} \ar[ll]_{\varepsilon}}
\end{displaymath}

In this case, there is only one possible labelling $z$ of $Q$. Let $R = \{\alpha \beta - \gamma \delta\}$. We have that $z(\alpha \beta) = z(\gamma \delta) = \varepsilon \eta$. This means that $\alpha \beta - \gamma \delta$ is neither internal nor external, and so $R$ is not compatible with $\sim$.

Nevertheless, the reader can verify that $R = \{\alpha \beta, \gamma \delta\}$ is compatible with $\sim$. This set contains two external relations.

\end{ex}

\begin{obs}
Observe that there is no ambiguity on the definition of compatibility since the given condition is defined on the set of generators and not on what they generate.

The necessity to label the arrows comes from the fact that a quiver might have multiple arrows between two vertices. If this does not happen, then there is only one labelling of the arrows and the concept of compatibility depends only on the set of relations.
\end{obs}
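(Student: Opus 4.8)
This remark asserts two facts, which I would establish in turn. For the first---that compatibility is unambiguous---the plan is to read off from Definition~\ref{def:rel_equiv} that each of its three clauses is a condition imposed on the finite set $R$ itself, rather than on the ideal $(R)$ it generates. Clauses (1) and (3) speak purely of the elements of $R$: clause (1) classifies each individual $\gamma \in R$ as internal or external, a test carried out by computing $z(\gamma_t)$ for the finitely many paths $\gamma_t$ occurring in $\gamma$, and clause (3) is a closure requirement on the set $R$. Clause (2) does ask that a certain subideal be admissible, but the subset of $R$ supported on a class $\overline{x}$ is itself determined by $R$, so this too is decided once $R$ is given. Thus ``$R$ is compatible with $\sim$'' is a predicate on $(R,\sim)$ (with $z$ fixed), verifiable by inspecting finitely many generators; I would emphasise that two generating sets of one ideal may receive opposite verdicts, which is no contradiction precisely because compatibility is never a property of $(R)$. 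This is all that ``no ambiguity'' is meant to exclude.

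For the second assertion I would first make precise when a labelling is forced to be unique. A labelling $z$ must restrict, for every pair $x,y$, to a bijection $Q^{\sim}(x,y) \to \frac{Q^{\sim}}{\sim}(\overline{x},\overline{y})$. I would begin by noting such bijections exist: coherence clause (2) forces the cross-class arrow count $[x,y]_Q$ to depend only on the classes $\overline{x} \neq \overline{y}$, so the maximum defining $[\overline{x},\overline{y}]$ is attained at every representative and the two arrow sets are equinumerous, while inside a single class $Q^{\sim}$ has no arrows by construction. Since distinct ordered pairs $(x,y)$ carry disjoint arrow sets and the bijection constraints decouple across them, the number of labellings equals $\prod |Q^{\sim}(x,y)|!$, the product taken over pairs of vertices lying in distinct classes. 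When $Q$ has at most one arrow between any two vertices, every factor $|Q^{\sim}(x,y)|$ is $0$ or $1$, each factorial equals $1$, and the labelling is unique. Finally, because the internal/external dichotomy and clause (3) are phrased through $z$, once $z$ is pinned to this unique labelling, compatibility reduces to a function of $R$ and $\sim$ alone.

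The step I expect to be most delicate is the labelling count. It rests on invoking coherence clause (2) twice---once for each coordinate---to conclude that $[x,y]_Q$ really depends only on the classes of $x$ and $y$; this is simultaneously what guarantees the required bijections exist and what makes the labelling choices factor as an independent product indexed by vertex pairs. Only once this is in place does the single hypothesis of no multiple arrows collapse every factor to $1$ and yield uniqueness. The first assertion, by contrast, requires no computation beyond a careful reading of the definition and a clear statement of which object---the generating set or the ideal---compatibility is a property of.
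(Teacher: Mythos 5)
Your proposal is correct, and it takes the only natural route: the paper states this as a remark with no proof at all, and your argument supplies precisely the intended justification --- reading off that all three compatibility clauses are predicates on the generating set $R$ (so distinct generating sets of $(R)$ may legitimately receive different verdicts), and using coherence clause (2) twice to see that cross-class arrow counts match, whence the labellings are counted by $\prod_{x \nsim y} |Q^{\sim}(x,y)|!$ and collapse to a unique one when there are no multiple arrows. The explicit factorial count is a small quantitative refinement beyond what the paper even hints at, but it is sound and makes the uniqueness claim transparent.
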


We shall see below (Theorems~\ref{th:simpl_from_equiv_rel} and~\ref{th:equiv_rel_from_simpl}) that there is some connection between simplifications and compatible equivalence relations. Roughly speaking, one can obtain one from the other. Analysing this connection will yield the promised criterion for simplifiability. We shall divide this study into the two parts below.

\subsection{\textit{Simplifications from equivalence relations}}
\label{subsubsec:simpl_from_rel}

We start with an algebra $A = kQ / (R)$, where $Q$ is a quiver, $R$ is a finite set of relations in $Q$ such that $(R)$ is admissible, and with an equivalence relation $\sim \subseteq Q_0 \times Q_0$ coherent with the arrows of $Q$ and compatible with $R$ relatively to a labelling $z$ of $Q$. Our aim is to produce a simplification without cycles $k(\Gamma,\Aa,I)$ of $A$.

The quiver $\Gamma$ will be the quotient quiver $\Gamma \doteq \frac{Q^{\sim}}{\sim}$, and thus the labelling $z$ is a quiver morphism $z: Q^{\sim} \rightarrow \Gamma$ which induces bijections $Q^{\sim}(x,y) \leftrightarrow \Gamma(x,y)$ for every pair of vertices $x,y \in Q_0$ such that $x \nsim y$.

We remark that $\Gamma$ defined this way is acyclic; indeed, if $\Gamma$ contains an oriented cycle, then it comes from an oriented cycle in $Q$, which is absurd, because the arrows of an oriented cycle in $Q$ must start and end at vertices in the same equivalence class, because of Definition~\ref{def:rel_equiv}.

Now write $\Gamma_0= \{\overline{x_1},\ldots,\overline{x_n}\}$, where the $x_i \in Q_0$ are all in distinct equivalence classes. We denote by $\Sigma_i$ the full subquiver of $Q$ whose vertices are those in $\overline{x_i}$.

By our hypothesis, we may write $R = R^{int} \sqcup R^{ext}$, where $R^{int}$ is composed only by internal relations, and $R^{ext}$, by external ones. Also, $R^{int} = \Omega_1 \sqcup \ldots \sqcup \Omega_n$, where $\Omega_i$ denotes the set of internal relations contained in $\Sigma_i$, for each $i$. Note that, due to Definition~\ref{def:rel_equiv}, $\Omega_i$ generates an admissible ideal in $\Sigma_i$. Then define $A_i = k\Sigma_i / (\Omega_i)$ and $\Aa = \{A_1, \ldots , A_n \}$.

Let $\gamma \in R^{ext}$. Then we may write 

$$\gamma = \displaystyle\sum_{t=1}^r \lambda_t \alpha_{t1} \delta_{t1} \alpha_{t2} \ldots \delta_{t(n_t-1)}\alpha_{tn_t}$$

where, for each $t$ and $i$,  $\lambda_t \in k$, $\delta_{ti}$ is a path whose vertices are all on the same equivalence class, and $\alpha_{ti}$ is an arrow between vertices on different equivalence classes. (Remember that, since $\gamma$ is external, all of its summands are straightforward paths). Then define a relation on $\Gamma$ in the following way:

$$z(\gamma) = \displaystyle\sum_{t=1}^r \lambda_t z(\alpha_{t1} \delta_{t1} \alpha_{t2} \ldots \delta_{t(n_t-1)}\alpha_{tn_t}) = \displaystyle\sum_{t=1}^r \lambda_t z(\alpha_{t1}) z(\alpha_{t2}) \ldots z(\alpha_{tn_t}) $$

Finally define $I = \{z(\gamma) : \gamma \in R^{ext}\}$. 

It remains to prove that this is in a fact a simplification of $A$, i.e., we have to prove that $A = kQ/(R) \cong k(\Gamma,\Aa,I)$.

If we apply Theorem~\ref{th:gen_icnlp} to the gbp-algebra $k(\Gamma,\Aa,I)$, one obtains that $k(\Gamma,\Aa,I) \cong \Gamma[\Sigma_1,\ldots,\Sigma_n]/(M)$ for a certain set of relations $M$. Also, it is easily observed that the quiver $\Gamma[\Sigma_1,\ldots,\Sigma_n]$ is isomorphic to $Q$, while $M$ can be taken as equal to $R^{int} \sqcup L(I)$. Summarizing, we have

$$k(\Gamma,\Aa,L) \cong \frac{kQ}{(M)} = \frac{kQ}{(R^{int} \sqcup L(I))} \text{, and } \frac{kQ}{(R)}=\frac{kQ}{(R^{int} \sqcup R^{ext})}$$

So it remains to prove that $R^{ext} = L(I)$.

$(\subseteq)$ Let $$\gamma = \displaystyle\sum_{t=1}^r \lambda_t \alpha_{t1} \delta_{t1} \alpha_{t2} \ldots \delta_{t(n_t-1)}\alpha_{tn_t} \in R^{ext}$$

where we use the same notation as above. Then 
$$z(\gamma) = \displaystyle\sum_{t=1}^r \lambda_t z(\alpha_{t1}) z(\alpha_{t2}) \ldots z(\alpha_{tn_t}) $$

And also $\delta_{ti} \in k\Sigma_{e(\alpha_{ti})}$ for all $i \geq 1$. By definition of $L(I)$ (Definition~\ref{def:induced_relations}), 

$$\gamma = \displaystyle\sum_{t=1}^r \lambda_t \alpha_{t1} \delta_{t1} \alpha_{t2} \ldots \delta_{t(n_t-1)}\alpha_{tn_t} \in L(I)$$

$(\supseteq)$ Again recall Definition~\ref{def:induced_relations}. Any element of $L(I)$ has the form 

$$\gamma = \displaystyle\sum_{t=1}^r \lambda_t \alpha_{t1} \eta_{t1} \alpha_{t2} \ldots \eta_{t(n_t-1)}\alpha_{tn_t}$$

where $\lambda_t \in k$, $\eta_{ti}$ is a path in $\Sigma_{e(\alpha_{ti})}$ for every $i \geq 1$, and

$$\displaystyle\sum_{t=1}^r \lambda_t z(\alpha_{t1}) \ldots z(\alpha_{tn_t})$$

is a relation in $I$. But, by the definition of $I$, this means that we have a relation

$$\gamma' = \displaystyle\sum_{t=1}^r \lambda_t \beta_{t1} \delta_{t1} \beta_{t2} \ldots \delta_{t(n_t-1)}\beta_{tn_t} \in R^{ext}$$

where $\delta_{ti}$ is a path in $\Sigma_{e(\beta_{ti})}$ for every $i \geq 1$ and, for every $t,i$, $z(\beta_{ti}) = z(\alpha_{ti})$.

Since, for every $t$, the $t$-th summands of $\gamma$ and $\gamma'$ have the same image through $z$, $\gamma' \in R^{ext}$ and $R$ is compatible with $\sim$, we have that $\gamma \in R^{ext}$. This finishes the proof.

To conclude this subsection, we summarize our findings:

\begin{defi}
With the notations introduced above, $A(\sim) \doteq k(\Gamma,\Aa,I)$ is called the \textbf{gbp-algebra associated} with the equivalence relation $\sim$ on the vertices of the Gabriel quiver of $A$.
\end{defi}

\begin{teo}
\label{th:simpl_from_equiv_rel}

Let $A = kQ/(R)$ be a bound path algebra with $R$ being a finite set of relations generating an admissible ideal and $\sim$ an equivalence relation coherent with the arrows and compatible
with $R$ relative to a fixed labelling of $Q$. Then $A(\sim)$ is a simplification without cycles of $A$.

\end{teo}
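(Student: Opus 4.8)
The plan is to verify directly that the gbp-algebra $A(\sim) = k(\Gamma,\Aa,I)$ assembled above is isomorphic to $A = kQ/(R)$, following the blueprint sketched before the statement. First I would check that the defining data is legitimate. I would confirm that $\Gamma = Q^{\sim}/\sim$ is acyclic: any oriented cycle in $\Gamma$ would lift to an oriented cycle in $Q$, but every arrow of such a cycle is identified by $\sim$ by coherence condition~(1) in Definition~\ref{def:rel_equiv}, and is therefore deleted in passing to $Q^{\sim}$, a contradiction. I would then use compatibility condition~(2) to confirm that each $\Omega_i$ generates an admissible ideal in $k\Sigma_i$, so that each $A_i = k\Sigma_i/(\Omega_i)$ is a genuine bound path algebra and $k(\Gamma,\Aa,I)$ is a bona fide gbp-algebra in the sense of Definition~\ref{def:relation}. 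Since $\Gamma$ is acyclic, Theorem~\ref{th:gen_icnlp} then applies to it.

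Next I would invoke Theorem~\ref{th:gen_icnlp} to rewrite $k(\Gamma,\Aa,I)$ as the ordinary bound path algebra $\Gamma[\Sigma_1,\ldots,\Sigma_n]/((\Omega_1,\ldots,\Omega_n)+L(I))$. The key point is that the reconstructed quiver $\Gamma[\Sigma_1,\ldots,\Sigma_n]$ is isomorphic to $Q$: its vertex set is $\bigcup_i (\Sigma_i)_0 = Q_0$, its intra-class arrows are exactly the arrows of $Q$ deleted in forming $Q^{\sim}$, and its inter-class arrows are recovered from those of $\Gamma$ through the labelling $z$. Here coherence condition~(2), which forces the arrow counts $[x,y]_Q$ to be constant as $x,y$ range over a pair of distinct classes, is exactly what guarantees that the quotient-quiver counts reproduce precisely the arrows of $Q$. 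Under this identification $(\Omega_1,\ldots,\Omega_n)$ is matched with $(R^{int})$, so the theorem reduces to proving that $R^{ext}$ and $L(I)$ generate the same ideal, equivalently that the generating relations coincide. One inclusion is immediate: each external relation, a sum of straightforward paths whose inter-class arrows are separated by internal segments $\delta_{ti}\in k\Sigma_{e(\alpha_{ti})}$, has image $z(\gamma)\in I$ by construction and hence has exactly the shape of a generator of $L(I)$ in Definition~\ref{def:induced_relations}, so $R^{ext}\subseteq L(I)$.

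The main obstacle is the reverse inclusion $L(I)\subseteq R^{ext}$, and this is precisely where compatibility condition~(3) is indispensable. A generic generator of $L(I)$ arises from some $z(\gamma')\in I$ (with $\gamma'\in R^{ext}$) by substituting into each gap an \emph{arbitrary} path $\eta_{ti}$ of $\Sigma_{e(\alpha_{ti})}$, and a priori such a ``refilled'' element need not belong to $R$ at all. Condition~(3) is exactly the closure property asserting that, for any family of straightforward paths $\eta_t$ with $z(\eta_t)=z(\gamma'_t)$, the relation $\sum_t \lambda_t \eta_t$ again lies in $R$ and is external; applying it to the refilled element places it in $R^{ext}$. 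Combining the two inclusions yields $R^{ext}=L(I)$, hence $A = kQ/(R)\cong A(\sim)$, and since $\Gamma$ was shown acyclic, the resulting simplification is without cycles, as required.
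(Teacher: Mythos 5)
Your proposal is correct and follows essentially the same route as the paper's own argument: verify acyclicity of $\Gamma$ via coherence, apply Theorem~\ref{th:gen_icnlp}, identify $\Gamma[\Sigma_1,\ldots,\Sigma_n]$ with $Q$ and $(\Omega_1,\ldots,\Omega_n)$ with $(R^{int})$, and reduce everything to the equality $R^{ext} = L(I)$, with compatibility condition~(3) doing exactly the work you assign to it in the reverse inclusion. Your explicit remark that one should compare generating sets (rather than literal ideal elements) is, if anything, slightly more careful than the paper's own phrasing.
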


\subsection{\textit{Equivalence relations from simplifications}}

We start with an algebra $A = kQ / (R)$, where $Q$ is a quiver, $R$ is a finite set of relations in $Q$ such that $(R)$ is admissible, and a simplification without cycles $k(\Gamma,\Aa,I)$. Our aim is to obtain an equivalence relation $\sim \subseteq Q_0 \times Q_0$ coherent with the arrows of $Q$, a labelling of $Q$, and a finite set of relations $R'$ on $Q$ compatible with $\sim$ such that $(R')$ is admissible and $kQ / (R) \cong kQ / (R')$.

As a result of this, we will go through the reverse process of the previous subsection. Namely, we know that every simplification of $A$ induces an equivalence relation as above, a fact that helps to describe which simplifications the algebra $A$ may have.

Again we write $\Gamma_0 = \{1,\ldots,n\}$, $\Aa = \{A_1, \ldots, A_n\}$. Using Theorem~\ref{th:gabriel_algebra}, we may write $A_i = k\Sigma_i /(\Omega_i)$, where $\Sigma_i$ is a quiver and $\Omega_i$ is a finite set of relations on $\Sigma_i$ such that $(\Omega_i)$ is admissible. Applying Theorem~\ref{th:gen_icnlp} to the gbp-algebra $k(\Gamma,\Aa,I)$, we obtain that $Q = \Gamma[\Sigma_1,\ldots,\Sigma_n]$, because of the uniqueness in Theorem~\ref{th:gabriel_algebra}. In particular, $Q_0 = \bigsqcup_{i=1}^n (\Sigma_i)_0$. Since this union is disjoint, it is a partition of $Q_0$ and so defines an equivalence relation $\sim \subseteq Q_0 \times Q_0$. Recovering the notation from Subsection~\ref{subsec:gen_icnlp}, there is a labelling $z:Q^{\sim} \rightarrow \frac{Q^{\sim}}{\sim}$ such that $z(\alpha_{l,e_i^p,e_j^q}) = \alpha_l$ for every $\alpha_l: i \rightarrow j \in \Gamma_1$, $1 \leq p \leq c_i$, and $1 \leq q \leq c_j$. Also from Theorem~\ref{th:gen_icnlp}, we obtain a finite set $R' = \Omega_1 \sqcup\ldots \sqcup \Omega_n \sqcup R(I)$ of relations on $Q$ such that $(R')$ is admissible and $k(\Gamma,\Aa,I) \cong kQ/(R')$. Since by hypothesis $kQ / (R) \cong k(\Gamma,\Aa,I)$, we have that $kQ / (R) \cong kQ / (R')$. Next we verify:

\begin{itemize}

\item $\sim$ is coherent with the arrows of $Q$:

Since $\Gamma$ is acyclic, all the vertices in an occasional oriented cycle in $Q$ are identified by $\sim$. Let us check the second condition.

Take $x,y,z \in Q_0$ with $x \sim y$, $y \nsim z$. Then

$$[x,z]_Q = [\overline{x},\overline{z}]_{\Gamma} = [\overline{y},\overline{z}]_{\Gamma} = [y,z]_Q$$

The proof that $[z,x]_Q = [z,y]_Q$ is analogous.

\item $R'$ is compatible with $\sim$:

Let $$\gamma = \displaystyle\sum_{t=1}^r \lambda_t \delta_{t0} \alpha_{t1} \delta_{t1} \ldots \alpha_{tn_t} \delta_{tn_t} \in R'$$

where we maintain the notation used in Subsection~\ref{subsubsec:simpl_from_rel}. By the definition of $R'$, $\gamma$ is either a relation from $\Omega_1 \sqcup \ldots \sqcup \Omega_n$ or a relation from $R(I)$. 

In the former case, $n_t = 0$ for all $t$, which means that $\gamma$ is internal. In the latter case, note that $\delta_{t0}$ and $\delta_{tn_t}$ will both have length zero for every $t$. Also,
 
$$\displaystyle\sum_{t=1}^r \lambda_t z(\alpha_{t1}) \ldots z(\alpha_{tn_t})$$

will be a relation in $I$, where the paths $z(\alpha_{t1}) \ldots z(\alpha_{tn_t})$ can be assumed to be pairwise distinct (because we can always write the relations in $I$ in a way that this holds true). This proves that $\gamma$ is external in this case. Thus every relation in $R'$ is either internal or external and the first condition for compatibility in Definition~\ref{def:rel_equiv} is fulfilled.

The second condition is easily verified because, by construction, $(\Omega_i)$ is admissible in $k\Sigma_i$.

The last condition for compatibility is verified by observing that, because of the way $R'$ was defined, the set of external relations in $R'$ is equal to the set 

$$\{\displaystyle\sum_{t=1}^r \lambda_t \gamma_t: \gamma_t \text{ is straightforward for all }t \text{ and } \displaystyle\sum_{t=1}^r \lambda_t z(\gamma_t) \text{ is a relation in }I\}$$

\end{itemize}

As we did in Subsection~\ref{subsubsec:simpl_from_rel}, we shall summarize our discussion in the form of the following theorem.

\begin{teo}
\label{th:equiv_rel_from_simpl}
Let $A = kQ/(R)$ be a bound path algebra. Suppose that $A$ has a simplification without cycles $k(\Gamma,\Aa,I)$,  with $\Gamma_0 =\{1,\ldots,n\}$ and $\Aa= \{k\Sigma_1/(\Omega_1), \ldots, k\Sigma_n/(\Omega_n)\}$ being a collection of bound path algebras. Then:

\begin{enumerate}

\item[(1)] $Q_0 = \sqcup_{i = 1}^n (\Sigma_i)_0$, and this partition of $Q_0$ defines an equivalence relation $\sim$ on $Q_0$ which is coherent with the arrows of $Q$;

\item[(2)] $R' = \Omega_1 \sqcup \ldots \sqcup \Omega_n \sqcup R(I)$ is such that $A \cong k(\Gamma,\Aa,I)\cong kQ / (R')$, and 

\item[(3)] There is a labelling $z$ of $Q$ such that $R'$ is compatible (relatively to $z$) with the equivalence relation $\sim$.
\end{enumerate}
\end{teo}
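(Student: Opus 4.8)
The plan is to exploit the hypothesis $A \cong k(\Gamma,\Aa,I)$ by feeding the gbp-algebra on the right into Theorem~\ref{th:gen_icnlp}. That theorem realizes $k(\Gamma,\Aa,I)$ as the bound path algebra $k\Gamma[\Sigma_1,\ldots,\Sigma_n]/\bigl((\Omega_1,\ldots,\Omega_n)+L(I)\bigr)$ with admissible ideal, so $\Gamma[\Sigma_1,\ldots,\Sigma_n]$ is the Gabriel quiver of $k(\Gamma,\Aa,I)$. Since $A \cong k(\Gamma,\Aa,I)$ and $Q$ is the Gabriel quiver of $A$, the uniqueness clause of Theorem~\ref{th:gabriel_algebra} forces $Q = \Gamma[\Sigma_1,\ldots,\Sigma_n]$. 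By the very construction of that quiver its vertex set is $\bigsqcup_{i=1}^n (\Sigma_i)_0$, which is exactly the disjoint union in part (1); declaring two vertices equivalent when they lie in the same $(\Sigma_i)_0$ yields the relation $\sim$. For part (2), I would read off from Theorem~\ref{th:gen_icnlp} that the admissible ideal $(\Omega_1,\ldots,\Omega_n)+L(I)$ is generated by $\Omega_1 \sqcup \cdots \sqcup \Omega_n \sqcup R(I) = R'$, whence $A \cong k(\Gamma,\Aa,I) \cong kQ/(R')$.

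To finish part (1) I would verify that $\sim$ is coherent with the arrows of $Q$. For condition (1) of Definition~\ref{def:rel_equiv}, observe that any oriented cycle of $Q = \Gamma[\Sigma_1,\ldots,\Sigma_n]$ must remain inside a single $\Sigma_i$: an arrow joining vertices of different blocks comes from an arrow of $\Gamma$, and $\Gamma$ is acyclic, so no cycle can leave a block; hence both endpoints of any arrow in a cycle are $\sim$-equivalent. For condition (2), I would use the arrow-counting rule in the construction of $\Gamma[\Sigma_1,\ldots,\Sigma_n]$: when $\overline{x} \neq \overline{z}$ the number $[x,z]_Q$ depends only on the blocks and equals $[\overline{x},\overline{z}]_\Gamma$, so if $x \sim y$ and $y \nsim z$ then $[x,z]_Q = [\overline{x},\overline{z}]_\Gamma = [\overline{y},\overline{z}]_\Gamma = [y,z]_Q$, and symmetrically for the reversed arrows.

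For part (3) I would take the labelling $z: Q^{\sim} \to \frac{Q^{\sim}}{\sim} = \Gamma$ coming from Subsection~\ref{subsec:gen_icnlp}, namely $z(\alpha_{l,e_i^p,e_j^q}) = \alpha_l$ for each arrow $\alpha_l: i \to j$ of $\Gamma$; the construction of $\Gamma[\Sigma_1,\ldots,\Sigma_n]$ guarantees this induces bijections on the relevant arrow sets, so it is a genuine labelling. Compatibility then reduces to inspecting the two families making up $R'$. Relations from some $\Omega_i$ involve only vertices of one block, so every summand has $z$-image of length zero and the relation is internal; its admissibility inside $k\Sigma_i$ is part of the hypothesis that each $A_i = k\Sigma_i/(\Omega_i)$ is a bound path algebra, giving condition (2). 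Relations from $R(I)$, by the shape of Equation~\ref{eq:R_gen_icnlp}, begin and end with cross-block arrows and have straightforward summands whose $z$-images assemble into a relation of $I$; these are external, giving condition (1). The third condition follows because, by construction, the external relations of $R'$ are precisely those $\sum_{t} \lambda_t \gamma_t$ with each $\gamma_t$ straightforward and $\sum_{t} \lambda_t z(\gamma_t) \in I$, a set manifestly closed under replacing summands by straightforward paths with the same $z$-image.

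The main obstacle I anticipate is the bookkeeping in condition (3): to call a relation coming from $R(I)$ external one must ensure the $z$-images of its distinct summands are pairwise distinct, and this requires rewriting each relation of $I$ into a normal form in which no two of its paths coincide before pulling back along $z$. Once this normal form is fixed the verification is routine, but it is the one place where the definition of external relation must be matched carefully against the output of Theorem~\ref{th:gen_icnlp}.
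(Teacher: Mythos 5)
Your proposal is correct and follows essentially the same route as the paper's own proof: apply Theorem~\ref{th:gen_icnlp} and the uniqueness in Theorem~\ref{th:gabriel_algebra} to identify $Q$ with $\Gamma[\Sigma_1,\ldots,\Sigma_n]$, take $\sim$ from the vertex partition, use the labelling $z(\alpha_{l,e_i^p,e_j^q})=\alpha_l$, and check coherence and compatibility exactly as you do. Even the subtlety you flag at the end—rewriting each relation of $I$ so its summand paths are pairwise distinct before declaring the pulled-back relations external—is the same point the paper handles with its parenthetical remark that relations in $I$ can always be written in that normal form.
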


\subsection{\textit{Some examples and applications}}

We shall devote this space to present some immediate applications of the criteria discussed in the previous subsections, and also show some practical examples.

\begin{obs}
Let $A$ be an algebra, and suppose $A = kQ / (R)$, where $Q$ is the Gabriel quiver of $A$ and $R$ is a finite set of relations over $Q$ generating an admissible ideal. By taking $\sim \subseteq Q_0 \times Q_0$ to be $Q_0 \times Q_0$ and applying Theorem~\ref{th:simpl_from_equiv_rel}, one obtains the first trivial simplification discussed in Remark~\ref{obs:trivial_simplifications}. In the case where $Q$ is acyclic, taking $\sim$ to be  $\{(x,x): x \in Q_0\}$ will yield the other trivial simplification.

\end{obs}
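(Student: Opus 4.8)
The plan is to treat the two claims separately, and in each case to run the explicit construction of $A(\sim)$ from Subsection~\ref{subsubsec:simpl_from_rel} for the prescribed extreme equivalence relation, after first checking that the hypotheses of Theorem~\ref{th:simpl_from_equiv_rel}, namely coherence with the arrows and compatibility with $R$, are satisfied. Since that construction is completely explicit, the proof will reduce to observing that it degenerates to the appropriate trivial simplification listed in Remark~\ref{obs:trivial_simplifications}.

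For the first claim, I would take $\sim$ to be $Q_0 \times Q_0$. Coherence is immediate: condition (1) of Definition~\ref{def:rel_equiv} holds because all vertices are identified, and condition (2) is vacuous since there is no pair with $i \sim j$ and $j \nsim k$. As every arrow $\alpha$ now has $s(\alpha) \sim e(\alpha)$, the reduced quiver $Q^{\sim}$ has no arrows, so the quotient $\Gamma = Q^{\sim}/\!\sim$ is the one-vertex quiver $\bullet_1$ with no arrows, and the unique labelling $z$ exists trivially. There is a single equivalence class, so $n = 1$ and $\Sigma_1 = Q$; moreover $z$ sends every path to length zero, whence every relation of $R$ is internal. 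Thus $R^{ext} = \varnothing$ and $R^{int} = \Omega_1 = R$, compatibility being immediate (its condition (2) reduces to the hypothesis that $(R)$ is admissible in $kQ$, and its condition (3) is vacuous). Unwinding the construction gives $A_1 = k\Sigma_1/(\Omega_1) = kQ/(R) = A$, $\Aa = \{A\}$ and $I = \varnothing$, so $A(\sim) = k(\bullet_1, \{A\})$, which is exactly the first trivial simplification.

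For the second claim, I would assume $Q$ acyclic and take $\sim$ to be the diagonal $\{(x,x) : x \in Q_0\}$. Coherence is again routine: condition (1) is vacuous as $Q$ has no oriented cycles, and condition (2) holds since $i \sim j$ forces $i = j$. Because $Q$ is acyclic it has no loops, so no arrow joins $\sim$-equivalent vertices; hence $Q^{\sim} = Q$ and $\Gamma = Q^{\sim}/\!\sim$ is isomorphic to $Q$, with $z$ the canonical arrow-bijective, length-preserving isomorphism. Each class is a singleton, so each $\Sigma_i$ is a single vertex with no arrows, giving $A_i = k$, $\Omega_i = \varnothing$, hence $\Aa = \{k : i \in Q_0\}$ and $R^{int} = \varnothing$. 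Since every path is straightforward and $z$ is injective on paths, every summand $\gamma_t$ of a relation satisfies $l(z(\gamma_t)) \geq 2$ and the $z(\gamma_t)$ are pairwise distinct, so every relation is external and $R^{ext} = R$. After checking compatibility (condition (2) holds because the zero ideal is admissible on a single-vertex quiver, and condition (3) holds because $z$ matches straightforward paths bijectively with their images), one obtains $I = \{z(\gamma) : \gamma \in R\}$, corresponding to $R$ under $\Gamma \cong Q$. Therefore $A(\sim) = k(Q, \{k : i \in Q_0\}, R) \cong kQ/(R) = A$, which is the other trivial simplification.

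I do not expect a deep obstacle here, as each verification is short; the only real care needed is the bookkeeping separating the two extremes. Concretely, the crux is to observe that in the first case $z$ collapses every path to length zero, forcing all relations to be internal and placing the whole of $A$ on the single vertex, whereas in the acyclic second case $z$ is injective and length-preserving, forcing all relations to be external and leaving $k$ on each vertex. I would be careful to record explicitly that the required labellings exist and are unique in both degenerate situations, and that all admissibility conditions feeding Theorem~\ref{th:simpl_from_equiv_rel} are inherited from the admissibility of $(R)$.
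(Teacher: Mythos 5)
Your proposal is correct and follows exactly the route the paper intends: the remark is stated without detailed proof precisely because it amounts to applying Theorem~\ref{th:simpl_from_equiv_rel} to the two extreme equivalence relations, which is what you do. Your verifications of coherence, compatibility, and the degeneration of the construction (all relations internal with $\Sigma_1 = Q$ in the first case; all relations external with $A_i = k$ and $\Gamma \cong Q$ in the acyclic second case) are the routine details the paper leaves implicit, and they check out.
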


\begin{obs}
This remark is inspired by \cite{FLi2}, Section 6. Again, let $A = kQ/(R)$ be an algebra as in the previous remark, but assume that every arrow contained in a relation of $R$ is a loop and that the oriented cycles in $Q$ are all loops. We may thus write $R = \cup_{i \in \Gamma_0} \Omega_i$, where $\Omega_i$ is the set of relations of $R$ which involve only the vertex $i$. Let $\sim$ be the relation of equality in $Q_0$, that is, any vertex is identified with only itself through $\sim$. Again, this equivalence relation is coherent with the arrows and $R$ is compatible with it, every relation being internal. For every $i \in Q_0$, let $\Sigma_i$ be the set of loops in $Q$ based on $i$. Also let $\Gamma$ be the quiver obtained from $Q$ by deleting all loops, that is, $\Gamma_0 = Q_0$ and $\Gamma_1 = Q_1 \setminus \cup_{i \in Q_0} (\Sigma_i)_1$. Let $A_i = k\Sigma_i / (\Omega_i)$ and $\Aa = \{A_i : i \in \Gamma_0\}$. Then, again using Theorem~\ref{th:simpl_from_equiv_rel}, $A \cong k(\Gamma,\Aa)$. We have thus shown that $A$ has a simplification without any loop but with the same number of vertices of its Gabriel quiver. This simplification can only be trivial when $Q$ has only one vertex or when it does not have any loop.

Let us summarize this remark under the following statement:

\end{obs}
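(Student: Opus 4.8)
The plan is to invoke Theorem~\ref{th:simpl_from_equiv_rel} directly, with $\sim$ taken to be the equality relation on $Q_0$ (each vertex identified only with itself). The whole content then reduces to checking that this $\sim$ is coherent with the arrows of $Q$ and that $R$ is compatible with it relative to a labelling of $Q$, and afterwards to identifying the constituents $\Gamma$, $\Aa$ and $I$ produced by the construction of $A(\sim)$ with those named in the statement.

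First I would verify coherence (Definition~\ref{def:rel_equiv}). Condition~(1) requires that every arrow lying on an oriented cycle have $\sim$-identified endpoints; since by hypothesis every oriented cycle of $Q$ is a loop $\alpha: i \rightarrow i$, and trivially $i \sim i$, this holds. Condition~(2) is vacuous: under equality the premise $i \sim j$ forces $i = j$, so no instance with $j \nsim k$ can arise. For compatibility, the key observation is that every arrow occurring in a relation of $R$ is a loop, so each generating relation is a linear combination of paths passing through a single vertex; hence its image under the labelling has length zero and the relation is \emph{internal}. This gives compatibility condition~(1), while condition~(3) is vacuously true because $R$ has no external relations. Condition~(2) amounts to the statement that, for each $i$, the relations of $R$ involving only vertex $i$ generate an admissible ideal in the full subquiver $\Sigma_i$ of loops at $i$; this follows from the decomposition $R = \bigsqcup_{i} \Omega_i$ together with the admissibility of $(R)$.

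Having checked the hypotheses, Theorem~\ref{th:simpl_from_equiv_rel} yields that $A(\sim) = k(\Gamma,\Aa,I)$ is a simplification without cycles of $A$. It remains to read off the pieces when $\sim$ is equality. The reduced quiver $Q^{\sim}$ deletes exactly the arrows whose endpoints coincide, i.e. the loops, so $\Gamma = Q^{\sim}/{\sim}$ has vertex set $Q_0$ and is precisely $Q$ with all loops removed. The full subquiver on the class $\{i\}$ is the vertex $i$ together with its loops, matching the $\Sigma_i$ of the statement, and $\Omega_i$ is the set of internal relations at $i$, so $A_i = k\Sigma_i/(\Omega_i)$. Since there are no external relations, $I = \varnothing$ and the simplification is without relations, giving $A \cong k(\Gamma,\Aa)$. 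For the non-triviality claim, note that this simplification has $|Q_0|$ vertices, so it cannot be equivalent to the one-vertex trivial simplification unless $|Q_0| = 1$; and it is equivalent to the other trivial simplification exactly when every $A_i \cong k$, which happens precisely when no $\Sigma_i$ carries a loop, i.e. when $Q$ has no loops. Outside these two degenerate cases the simplification is therefore non-trivial.

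I expect no serious obstacle here: the argument is a routine specialization of Theorem~\ref{th:simpl_from_equiv_rel}. The only point demanding care is the bookkeeping needed to match the abstract objects $Q^{\sim}$, $\Sigma_i$, $\Omega_i$ and $I$ against the equality relation, and the final non-triviality discussion, which relies on the precise notion of equivalence of simplifications.
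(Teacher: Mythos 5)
Your proposal is correct and follows essentially the same route as the paper: the remark itself takes $\sim$ to be the equality relation, observes that it is coherent with the arrows and that $R$ is compatible with it (every relation being internal, since all arrows in relations are loops), and then invokes Theorem~\ref{th:simpl_from_equiv_rel}, identifying $\Gamma$ with $Q$ minus its loops, $A_i$ with $k\Sigma_i/(\Omega_i)$, and $I = \varnothing$. Your verification of the coherence and compatibility conditions, and your closing discussion of when the resulting simplification degenerates to a trivial one, are in fact slightly more detailed than the paper's own wording.
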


\begin{cor}
Let $A = kQ/(R)$, where $Q$ is a quiver whose only oriented cycles are loops, and $R$ is a finite set of relations, all of them contained in the loops of $Q$, such that $(R)$ is admissible. Then $A$ has a simplification without relations and without cycles $k(\Gamma,\Aa)$, where $\Gamma$ is the quiver obtained from $Q$ by deleting all loops.
\end{cor}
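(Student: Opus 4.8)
The plan is to obtain the simplification as $A(\sim)$ for a suitably chosen equivalence relation, so that the corollary becomes a direct instance of Theorem~\ref{th:simpl_from_equiv_rel}. Concretely, I would take $\sim$ to be the equality relation on $Q_0$, identifying each vertex only with itself. With this choice the reduced quiver $Q^{\sim}$ is obtained from $Q$ by deleting precisely the arrows $\alpha$ with $s(\alpha)=e(\alpha)$, that is, the loops; hence $Q^{\sim}$ has the same vertices as $Q$ and its arrows are exactly the non-loop arrows. Since $\sim$ is trivial, the quotient $Q^{\sim}/\sim$ is canonically identified with $Q^{\sim}$, and the labelling $z$ may be taken to be the identity. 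The two hypotheses I then need to verify are that $\sim$ is coherent with the arrows of $Q$ and that $R$ is compatible with $\sim$ relative to $z$; once these hold, Theorem~\ref{th:simpl_from_equiv_rel} delivers a simplification without cycles $A(\sim)=k(\Gamma,\Aa,I)$.

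Coherence is where the hypothesis on cycles enters. For the first condition of Definition~\ref{def:rel_equiv}, any arrow lying in an oriented cycle of $Q$ must be a loop, since by assumption the only oriented cycles of $Q$ are loops; such an arrow has equal start and end, so its endpoints are identified by the equality relation. The second condition is vacuous for equality, because $i \sim j$ forces $i=j$. Turning to compatibility, I would first observe that every relation in $R$ is internal: each relation is by hypothesis a combination of loops at a single vertex, and since $z$ deletes all loops the image under $z$ of each summand has length $0$, which is exactly the defining condition of an internal relation. Consequently $R^{ext}=\emptyset$, so the external condition of compatibility is vacuous and the first condition holds automatically.

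The remaining, and main technical, point is the admissibility condition in the definition of compatibility: for each vertex $i$, the full subquiver $\Sigma_i$ determined by $\{i\}$ consists of $i$ together with its loops, and I must show that the set $\Omega_i$ of relations of $R$ supported at $i$ generates an admissible ideal of $k\Sigma_i$. The inclusion $(\Omega_i)\subseteq J_i^2$, where $J_i$ is the arrow ideal of $k\Sigma_i$, is immediate, since relations are sums of paths of length at least two. For the reverse inclusion $J_i^N\subseteq(\Omega_i)$ I would localize the admissibility $J^N\subseteq(R)$ holding in $kQ$: a path of length at least $N$ built from loops at $i$ lies in $(R)$, hence is a combination $\sum u_t\rho_t v_t$ with $\rho_t\in R$ and $u_t,v_t$ paths of $Q$; here the cycle hypothesis is used once more, for in order that such a term be a nonzero loop-path at $i$ the relation $\rho_t$ must itself be supported at $i$ and both $u_t,v_t$ must be cycles at $i$, hence products of loops at $i$, which places the term in $(\Omega_i)$ inside $k\Sigma_i$. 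I expect this localization step to be the real obstacle, since it requires arguing carefully that restricting an admissible ideal of $kQ$ to the paths living inside a single loop-subquiver again yields an admissible ideal.

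With coherence and compatibility established, Theorem~\ref{th:simpl_from_equiv_rel} gives $A\cong A(\sim)=k(\Gamma,\Aa,I)$, and it then remains only to read off the shape of this simplification. Since there are no external relations, $I=\emptyset$, so the simplification is without relations; and since $\Gamma=Q^{\sim}/\sim$ is $Q$ with its loops deleted while $Q$ has no non-loop oriented cycles, any oriented cycle of $\Gamma$ would lift to one of $Q$ using only non-loop arrows, which is impossible, so $\Gamma$ is acyclic and the simplification is also without cycles. This is exactly the asserted $k(\Gamma,\Aa)$ with $\Gamma$ obtained from $Q$ by deleting all loops.
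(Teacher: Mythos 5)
Your proposal follows exactly the paper's route: take $\sim$ to be the equality relation on $Q_0$, check that it is coherent with the arrows and that $R$ is compatible with it (every relation being internal, so $I=\emptyset$), and invoke Theorem~\ref{th:simpl_from_equiv_rel} to get $A\cong k(\Gamma,\Aa)$ with $\Gamma$ the loop-deleted quiver. The only difference is that you verify in detail condition (2) of compatibility --- that $(\Omega_i)$ is admissible in $k\Sigma_i$ --- which the paper asserts without proof; your localization sketch is sound and can be made fully rigorous by using the multiplicative linear projection $kQ \rightarrow k\Sigma_i$ that fixes paths composed of loops at $i$ and kills all other paths, which handles the cancellation issue among the terms $u_t\rho_t v_t$.
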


\begin{ex}
Remember the algebra of Example~\ref{ex:rel_equiv}. Using Theorem~\ref{th:simpl_from_equiv_rel}, if $R = \{\alpha \beta, \gamma \delta\}$, then the algebra has a simplification

\begin{displaymath}
\xymatrix{
k & & k^2 \ar[ll]_{\overline{\beta}} & & k \ar[ll]_{\overline{\alpha}}
}
\end{displaymath}

with the relation $\overline{\alpha} \overline{\beta} = 0$.

\end{ex}

\begin{ex}
Consider the algebra given by the quiver

\begin{displaymath}
\xymatrix{
1 \ar@<1ex>[drr]^{\alpha_1} \ar[drr]_{\beta_1} & & & & 4 \ar[d]^{\delta} \\
 & & 3 \ar[rr]^{\gamma_2} \ar[urr]^{\gamma_1} \ar[drr]^{\gamma_3}& & 5 \ar[d]^{\epsilon} \\
2 \ar@<-1ex>[urr]_{\beta_2} \ar[urr]^{\alpha_2} & & & & 6
}
\end{displaymath}

bound by $\alpha_1 \gamma_1 = 0$, $\alpha_1 \gamma_2 = 0$, $\alpha_1 \gamma_3 = 0$, $\alpha_2 \gamma_1 = 0$, $\alpha_2 \gamma_2 = 0$, $\alpha_2 \gamma_3 = 0$, and $\delta \epsilon=0$.

Let $\sim$ be the smallest equivalence relation such that $1\sim2$ and $4 \sim 5 \sim 6$. Then $\sim$ is coherent with the arrows of the quiver. The quotient of the reduced quiver by $\sim$ is given below:

$$\xymatrix{
a \ar@<0.5ex>[rr]^{\alpha} \ar@<-0.5ex>[rr]_{\beta} & & b \ar[rr]^{\gamma} & & c
}$$

Then we can define a labelling $z$ by establishing $z(1) = z(2) = a$, $z(3) = b$, $z(4) = z(5) = z(6) = c$, $z(\alpha_1) = z(\alpha_2) = \alpha$, $z(\beta_1) = z(\beta_2) = \beta$ and $z(\gamma_1) = z(\gamma_2) = z(\gamma_3) = \gamma$.

With this labelling, the set of relations given above is compatible with $\sim$. Indeed, $\delta \epsilon$ is an internal relation and the relations of the form $\alpha_i \gamma_j$ are external relations, all of them having the induced path $\alpha \gamma$ on the quotient. Moreover, every straightforward path whose induced path is $\alpha \gamma$ is one of the $\alpha_i \gamma_j$. So this algebra is isomorphic to the gbp-algebra

$$\xymatrix{
k^2 \ar@<0.5ex>[rr]^{\alpha} \ar@<-0.5ex>[rr]_{\beta} & & k \ar[rr]^{\gamma} & & A
}$$

bound by $\alpha \gamma = 0$, where $A$ is the algebra given by the quiver 

$$\xymatrix{
4 \ar[d]^{\delta} \\
5 \ar[d]^{\epsilon} \\
6
}$$

bound by $\delta \epsilon =0$.

\end{ex}

\section{Acknowledgements}
This paper was written while the first named author was a graduate student under the supervision of the second. The authors gratefully acknowledge financial support by São Paulo Research Foundation (FAPESP), grant \#2018/18123-5. The second author has also a grant by CNPq (Pq 312590/2020-2).

\end{document}